\documentclass[twoside,a4paper,11pt]{amsart}

\usepackage{MyStyle}
\begin{document}

\title{The internal Yoneda lemma for locally Cartesian closed $\infty$-categories}
\author{Virgile Constantin}\begin{abstract}
  We formulate and prove internal versions of the Yoneda lemma and of the Yoneda embedding theorem in a finitely complete, locally Cartesian closed $\infty$-category $\C$: for every object $X\in\C$ and every universe $\U$ classifying the diagonal of $X$, the Yoneda map $\yo_X\colon X\to\U^X$ is a monomorphism. The proof uses only finite limits, dependent products and universes, and does not rely on the external Yoneda lemma. The result applies notably to every elementary $\infty$-topos, where it recovers a theorem of Rasekh \cite{Rasekh_YonedaLemmaElementaryTopos}.
\end{abstract}

\maketitle
\tableofcontents
\section*{Introduction}
For a locally small $\infty$-category $\X$, the Yoneda embedding $\yo_\X\colon\X\to\Fun(\X^\op,\Sp)$, $x\mapsto\X(-,x)$, is fully faithful. This is a central result formulated \emph{externally}: it lives in the large $\infty$-category of $\infty$-categories.

\medskip

This paper is concerned with an internal counterpart of this statement, in an $\infty$-category $\C$ that is finitely complete and locally Cartesian closed. Inside such an $\infty$-category, no ambient copy of $\Sp$ is available to receive presheaves. What replaces it is a \emph{universe}: an object $\U\in\C$ that classifies a class of maps in $\C$ and plays, internally, the role that $\Sp$ plays externally \cite{GepnerKock_UnivalenceInLCCC, Rasekh_UnivalenceHigherCategoryTheory}. A map $X\to\U$ encodes a family of objects over $X$, and exponentiation accordingly produces an object $\U^X$ of internal presheaves on $X$. In this setting the objects of $\C$ behave like generalized homotopy types. Indeed, finitely complete locally Cartesian closed $\infty$-categories furnish the categorical semantics of dependent type theory with dependent products and intensional identity types \cite{Kapulkin_LCCQuasicategoriesFromTypeTheory, KapulkinSzumilo_InternalLanguagesOfFinitelyCompleteInftyCategories, Cherradi_InternalLanguagesLCCinfintyCategories}. For an object $X$ whose diagonal is classified by $\U$, one may then ask for an internal analogue of the Yoneda embedding: a Yoneda map $\yo_X\colon X\to\U^X$ (\cref{definition: yoneda map}) that is fully faithful in a suitable internal sense.

\medskip

This paper grew out of the study of higher covering spaces in an $\infty$-topos \cite{Constantin_HigherCoveringSpacesinHigherTopos}, where the identification of the group of deck transformations requires exactly this internal embedding. That application is described in \cref{section: application}.

\medskip

The main result of this paper is the following.
\begin{theorem*}[{Yoneda Embedding, \cref{Yoneda Embedding}}]
    Let $\C$ be a finitely complete locally Cartesian closed $\infty$-category, $X$ an object of $\C$, and $\U$ a universe classifying the diagonal $\Delta_X\colon X\to X\times X$. The Yoneda map \[\yo_X\colon X\to\U^X\] is a monomorphism.
\end{theorem*}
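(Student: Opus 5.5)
The plan is to test monomorphy on diagonals. A map $f\colon X\to Y$ in $\C$ is a monomorphism iff $X\to X\times_Y X$ is an equivalence. Equivalently, viewed in the slice over $X\times X$, the comparison map
\[\Delta_X \longrightarrow (f\times f)^*\Delta_Y\]
is an equivalence; here $\Delta_X$ plays the role of the internal path object $\mathrm{Path}_X(x,y)$. So for $f=\yo_X$ I will compute the pullback $(\yo_X\times\yo_X)^*\Delta_{\U^X}$ as an object over $X\times X$ and compare it with $\Delta_X$. Throughout I work in the internal language of $\C$: finite limits give $\Sigma$-types and identity types, local Cartesian closure gives $\Pi$-types. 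Every step below is a statement about maps in slices of $\C$, so it can be phrased with pullbacks and dependent products only, with no appeal to the external Yoneda lemma. Recall that $\yo_X$ is the transpose of the classifying map $X\times X\to\U$ of $\Delta_X$, so $\yo_X(x)$ is the family $z\mapsto \mathrm{Path}_X(z,x)$.

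The first step uses function extensionality, which holds in any locally Cartesian closed $\infty$-category because exponentials are dependent products. It identifies $\mathrm{Path}_{\U^X}(\yo x,\yo y)$ with $\Pi_{z\colon X}\mathrm{Path}_\U(\mathrm{Path}(z,x),\mathrm{Path}(z,y))$. The second step uses the univalence of $\U$, which is part of the paper's notion of universe classifying a map. It identifies each $\mathrm{Path}_\U(A,B)$ with the object $\mathrm{Eq}(A,B)\subseteq B^A$ of equivalences. The result is an equivalence over $X\times X$:
\[(\yo_X\times\yo_X)^*\Delta_{\U^X}\;\simeq\;\Pi_{z}\,\mathrm{Eq}\bigl(\mathrm{Path}(z,x),\mathrm{Path}(z,y)\bigr).\]

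The third step is the internal Yoneda lemma itself. Evaluation at $\mathrm{refl}_x$ gives a map
\[\Pi_z\bigl(\mathrm{Path}(z,x)\to\mathrm{Path}(z,y)\bigr)\longrightarrow\mathrm{Path}(x,y),\]
and I will show it is an equivalence over $X\times X$. The key input is that the based path object $\Sigma_z\mathrm{Path}(z,x)\to X$ is an equivalence, i.e.\ it is contractible over $X$. This is just the statement that the projection $X\times_{X\times X}\ldots$ built from $\Delta_X$ has a section which is an equivalence, and it follows from finite limits alone. Contractibility gives path induction: families over $\Sigma_z\mathrm{Path}(z,x)$ are determined by their restriction along $\mathrm{refl}$. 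Concretely, $\Pi_{(z,p)}P(z,p)\to P(x,\mathrm{refl})$ is an equivalence because it is restriction along the equivalence $X\to\Sigma_z\mathrm{Path}(z,x)$. Applying this with $P(z,p)=\mathrm{Path}(z,y)$ and currying proves the claim. Under this equivalence, fiberwise equivalences correspond to all paths, since the map induced by a path is postcomposition with it, hence invertible. Thus the inclusion of $\Pi_z\mathrm{Eq}$ into $\Pi_z(\to)$ is an equivalence, and the composite gives
\[(\yo_X\times\yo_X)^*\Delta_{\U^X}\simeq\Delta_X\]
over $X\times X$.

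The main obstacle is checking that this chain of equivalences really is inverse to the canonical comparison $\Delta_X\to(\yo_X\times\yo_X)^*\Delta_{\U^X}$, i.e.\ the map induced by $\yo_X$ on paths, and not merely an abstract equivalence of objects over $X\times X$. I would handle it by path induction once more. Both maps out of $\Delta_X$ are maps over $X\times X$ from a contractible-over-$X$ family, so it suffices to compare them at $\mathrm{refl}$, where both send $\mathrm{refl}_x$ to $\mathrm{refl}_{\yo x}$, i.e.\ to the identity equivalence. A secondary technical point is making the univalence and function extensionality identifications precise as maps in slices of $\C$, with the required universe size conditions. There I would rely on the univalence results quoted from \cite{GepnerKock_UnivalenceInLCCC} and on the definition of the Yoneda map in \cref{definition: yoneda map}.
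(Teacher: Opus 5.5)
Your decomposition matches the paper's. Function extensionality plus univalence is \cref{Lemma: path in fibered universe are equivalences over the base}; doing function extensionality first lets you apply \cref{Univalence Axiom} directly in $\C$ over $X\times T$, rather than needing univalence of $p\times X$ in $\slice{\C}{X}$. Evaluation at reflexivity via contractibility of the based path object is \cref{Yoneda Lemma} with \cref{remark: Fibered representable is T}. ``Postcomposition with a path is invertible'' is \cref{lemma: nat trans between representables are nat equivalences}.

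You diverge at the final step, which the paper flags explicitly (``this equivalence is not claimed to be the comparison map''). You want to show that your chain $c\colon(\yo_X\times\yo_X)^\ast\Delta_{\U^X}\to\Delta_X$ inverts the canonical comparison $\varphi$ by checking at reflexivity. In $\C$ itself this is not a reduction. The total space of $\Delta_X$ is $X$ and the reflexivity section is $1_X$, so ``comparing at refl'' is comparing $c\circ\varphi$ with the identity outright. All the content therefore sits in the claim that each of the three identifications sends reflexivity/identity sections to reflexivity/identity sections. That claim is free from the computation rules of the internal type theory. In a direct $\infty$-categorical proof, however, each equivalence must be built as the canonical map (happly, idtoeqv, evaluation at refl) rather than obtained abstractly as in the paper, and its coherences must be tracked.

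The paper avoids this with \cref{lemma: retract rigidity}. It uses the same fact you invoke, that $\Delta_X$ is contractible over $X$ (i.e.\ retracted by $\pi_1$), but differently. Any endomorphism $u$ of $\Delta_X$ over $X\times X$ satisfies $u\simeq\pi_1\Delta_X u\simeq\pi_1\Delta_X\simeq 1_X$. Hence $c\circ\varphi$ is invertible, and so is $\varphi$, whatever $c$ does on reflexivity. Type-theoretically, this is the fundamental theorem of identity types.

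Your route gives more: the chain is literally the inverse of $\mathrm{ap}_{\yo_X}$. The price is that bookkeeping, or reliance on the internal-language theorems. The paper's route gives exactly the invertibility of $\varphi$, which is all a monomorphism needs.
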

Paralleling the classical case, the Yoneda map $\yo_X$ is defined as the adjoint of the internal mapping presheaf $X(-,-)\colon X\times X\to\U$, namely the classifying map of the diagonal $\Delta_X$. The hypotheses barely exceed what is needed to state the result: finite limits and dependent products produce the object $\U^X$ together with the internal path and equivalence objects, and the universe supplies the target of the Yoneda map. Every elementary $\infty$-topos in the sense of Rasekh \cite{Rasekh_TheoryElementaryHigherToposes} has, for each of its objects $X$, a universe classifying $\Delta_X$ (it has \emph{sufficient universes}), so the theorem applies to all of its objects. As a corollary, one obtains a Yoneda embedding theorem for small $\infty$-groupoids:
\begin{corollary*}[{\cref{Cor: yoneda for groupoids}}]
    Let $\X$ be a small $\infty$-groupoid. Then the Yoneda map $\yo_\X\colon\X\to \Psh(\X)$ is fully faithful.
\end{corollary*}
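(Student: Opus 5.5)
The corollary concerns a small $\infty$-groupoid $\X$. The plan is to deduce it from the Yoneda Embedding theorem applied in the $\infty$-category $\C=\Sp$ of spaces. First, $\Sp$ is finitely complete and locally Cartesian closed, being an $\infty$-topos; in particular it is an elementary $\infty$-topos. Second, for the small $\infty$-groupoid $\X$, viewed as an object of $\Sp$, I would take $\U$ to be a universe of small spaces, i.e.\ the core $\Sp^{\simeq}$ of the $\infty$-category of small spaces. This classifies all maps with small fibres, and in particular the diagonal $\Delta_\X\colon\X\to\X\times\X$, whose fibre over $(x,y)$ is the path space $\X(x,y)$.

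In the main theorem, the internal Yoneda map $\yo_\X\colon\X\to\U^\X$ is adjoint to the classifying map $\X\times\X\to\U$, $(x,y)\mapsto\X(x,y)$. The theorem says this map is a monomorphism in $\Sp$. A monomorphism of spaces is precisely a map inducing equivalences on all path spaces $\X(x,y)\xrightarrow{\sim}\U^\X(\yo x,\yo y)$, i.e.\ an inclusion of a union of path components. So the internal statement already says that $\yo_\X$ is fully faithful as a functor of $\infty$-groupoids into $\U^\X=\Fun(\X,\Sp^{\simeq})$.

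The remaining step, which I expect to be the only real obstacle, is to compare this with the external Yoneda map $\yo_\X\colon\X\to\Psh(\X)=\Fun(\X^\op,\Sp)$. This needs two identifications.
\begin{itemize}
\item Since $\X$ is a groupoid, $\X^\op\simeq\X$ via inversion, and the presheaf $\X(-,x)$ corresponds to the functor $\X(x,-)$ up to this identification.
\item $\Fun(\X,\Sp)$ has core $\Fun(\X,\Sp)^{\simeq}\simeq\Fun(\X,\Sp^{\simeq})$, because every natural transformation between functors out of a groupoid, with values in any $\infty$-category, is invertible as soon as its components are. Here, however, the components of a morphism of presheaves need not be equivalences, so this gives no such conclusion.
\end{itemize}
Hence I must argue more carefully that the mapping spaces in $\Psh(\X)$ between representables coincide with those in the core. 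By the external Yoneda lemma, $\Psh(\X)(\X(-,x),\X(-,y))\simeq\X(x,y)$, which is a groupoid. Every such map is therefore induced by a path and is an equivalence, so the mapping space equals the one in the core.

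Alternatively, to avoid the external Yoneda lemma, I would identify $\U^\X$ with the core and note that a natural transformation $\X(-,x)\to\X(-,y)$ over a groupoid is determined fibrewise. I would then check, using the internal theorem together with the straightening equivalence $\Fun(\X,\Sp)\simeq\Sp_{/\X}$, that it corresponds to a map of fibrations $P_x\to P_y$ between based path spaces. Since both path spaces are contractible, every such map is a fibrewise equivalence, so it lies in the core. Combining this with the monomorphism statement gives full faithfulness of $\yo_\X$ into $\Psh(\X)$.
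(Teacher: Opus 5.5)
Your second route, the one that avoids the external Yoneda lemma, is the paper's proof: the theorem makes $\yo_\X$ a monomorphism into $\U^\X\simeq\Fun(\X,\Sp_\kappa)^\simeq\subseteq\Psh(\X)^\simeq$ via $\X\simeq\X^\op$, and since every map between representables is invertible, mapping spaces in the core agree with those of $\Psh(\X)$ (the paper gets invertibility from \cref{lemma: nat trans between representables are nat equivalences} with $\C=\Sp$, $T=\ast$, which rests on the same contractibility of $\slice{\X}{x}$ that you use). Two corrections: $\Sp^\simeq$ is a large space, not an object of $\Sp$, so you should take $\U=\Sp^\simeq_\kappa$ for a sufficiently large small regular cardinal $\kappa$ and use that $\Sp^\simeq_\kappa\hookrightarrow\Sp^\simeq$ is fully faithful, or else apply the theorem in $\hat{\Sp}$; and your first route via the external Yoneda lemma is valid but proves the corollary outright, which makes the internal theorem superfluous.
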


\medskip

The defining property of a universe is univalence (\cref{Univalence Axiom}), stipulating that paths in $\U$ correspond to equivalences. One could take this as the definition of a universe, as is common in the type-theoretic literature. Instead, we prefer to derive it from the classifying property (\cref{prop: a universe is an object of equivalences}) for three reasons. First, the derivation avoids invoking the external Yoneda lemma to identify $\U$ with the desired object of equivalences. Second, it keeps our definition of universes aligned with the existing one in the non-presentable setting \cite{Rasekh_UnivalenceHigherCategoryTheory}. Lastly, it forces the fibrational route that the rest of the paper follows, as explained in the next paragraph. Accordingly, the external fibrational picture is developed first, in \cref{subsection: fibrations}: the preliminary results rest on it, and it fixes the intuition that the internal arguments of \cref{section : Yoneda} mirror.

\medskip

\textbf{Method.} The main idea is the fibrational dictionary, taken internally. A family over a base $X$, called an \textit{internal fibration} $E\to X$, is encoded by a map $X\to\U$ into a universe, an \textit{internal presheaf} (\cref{terminology: internal fibrations and presheaves}). For instance, the representable presheaf $X(-,x)\colon X\to\U$ encodes the representable fibration $\slice{X}{x}\to X$ (\cref{subsection: representable fibrations}). The Yoneda lemma is then stated internally using the fibrational language: as a correspondence between maps of fibrations $\slice{X}{x}\to P$ over $X$ and the fiber $P_x$. The naive formulation using internal presheaves fails, forcing the fibrational approach (\cref{subsection: yoneda lemma}). On this basis the Yoneda map is built from the classifying map of $\Delta_X$, the internal analogue of the domain-codomain fibration, and \textit{fully faithful} is rendered as \textit{monomorphism} (\cref{subsection: the Yoneda map}). We never use the Yoneda lemma: neither the formula $\mathrm{Nat}(\yo c,F)\simeq F(c)$ nor the full faithfulness of $\yo\colon\C\to\Psh(\C)$. Two of its classical consequences are needed -- conservativity and essential injectivity: a map is an equivalence as soon as it is one on all representables, and equivalent representables have equivalent representing objects -- and both are proved directly, by fibrational means, in \cref{prop: equivalence is detected on representables}.

\medskip

\textbf{Organization of the paper.}
\cref{section : preliminaries} fixes conventions and assembles the preliminaries: the fibrational dictionary, monomorphisms, local Cartesian closure, objects of equivalences, and universes. The Yoneda map is constructed, and proved to be a monomorphism, in \cref{section : Yoneda}. \cref{section: application} recovers the classical Yoneda statements for small $\infty$-groupoids and presents the application to higher covering spaces that motivated this work.

\medskip

\textbf{Relation to other work.}
\cref{Yoneda Embedding} was proved by Rasekh \cite{Rasekh_YonedaLemmaElementaryTopos} under the stronger hypotheses of an elementary $\infty$-topos, by a different route, analyzing the Yoneda map through the external mapping spaces $\Map(A,\yo_X)$ for $A\in\C$. In the type-theoretic literature, the Yoneda lemma appears in \cite[Theorem~13.3.3]{Rijke_IntroductionHoTT} and the Yoneda embedding in \cite[\S27]{Escardo_IntroductionUnivalentFoundations}. These are the type-theoretic counterparts of our results; the correspondence is made precise by the internal-language theorems cited above. Our proofs are independent of that route: they take place directly in the $\infty$-category, at a generality that presupposes no type-theoretic presentation. Univalence of universes was established by Gepner and Kock \cite{GepnerKock_UnivalenceInLCCC} for presentable locally Cartesian closed $\infty$-categories, and by Rasekh \cite{Rasekh_UnivalenceHigherCategoryTheory} in the non-presentable setting. The Yoneda lemma has been proven in a variety of contexts, for instance: \cite{Martini_YonedasLemmaInternalHigherCategories} for internal $\infty$-categories, \cite{frohlichMoser_YonedaLemmaRepresentationTheorem} for double categories, \cite{Rasekh_YonedaForSimplicialSpaces} for simplicial spaces, \cite{Barwick_ParametrizedHigherCategoryTheory} for parametrized $\infty$-categories, or \cite{RiehlShulman_TypeTheoryForSynteticInftyCategories} for synthetic $\infty$-categories in simplicial type theory.
\medskip

\textbf{Use of AI assistance.}
The writing of this paper was assisted by Claude, a large language model made by Anthropic. Mathematically, the AI identified a gap in an earlier version of the proof of \cref{Yoneda Embedding} (the comparison map induced by $\yo_X$ was not shown to coincide with the equivalence produced by the three main lemmas) and suggested the retract rigidity argument (\cref{lemma: retract rigidity}) to close it. It also observed that the hypotheses of an elementary $\infty$-topos could be weakened to those of the present paper, by replacing the proof of \cref{prop: an equivalence that factors with a mono is an equivalence}, which was originally based on effective epimorphisms, with the elementary argument of \cref{lemma: mono with section is an equivalence}. Editorially, it suggested trimming certain passages for conciseness, and assisted with English phrasing and overall smoothness of the exposition. All AI-produced suggestions were verified, and where necessary corrected, by the author.

\section{Preliminaries}\label{section : preliminaries}
This section fixes notation and assembles the tools used in \cref{section : Yoneda}. The presentation obeys one constraint: we never invoke the $\infty$-categorical Yoneda lemma. Its role is taken over by the detection of equivalences on representables (\cref{prop: equivalence is detected on representables}), which shows that the Yoneda map $\C\to\Psh(\C)$ is conservative and essentially injective. This comes as an easy consequence of the fibrational dictionary set up in \cref{subsection: fibrations}. The remaining subsections supply the setting in which this dictionary is carried inside a finitely complete locally Cartesian closed $\infty$-category $\C$: monomorphisms (\cref{subsection: monomorphisms}), local Cartesian closure (\cref{subsection: LCCC}), the internal object of equivalences (\cref{subsection: equivalences}), and universes (\cref{subsection: universes}). A reader familiar with this material may proceed to \cref{section : Yoneda} and refer back as needed.
\subsection{Conventions}
\paragraph{\textbf{$\infty$-categorical}}
Throughout this paper, we employ the language of higher category theory, considering only homotopy-invariant constructions. By an $\infty$-category, we mean an $(\infty, 1)$-category. We fix a hierarchy of Grothendieck universes and speak of small, large, and very large $\infty$-categories. Unless stated otherwise our $\infty$-categories are locally small, the exception being $\hat{\Sp}$. Our arguments are essentially model-independent, as they rely solely on the manipulation of finite limits, adjunctions, internal homs, and universes. To be precise, however, one could interpret all our results within the framework of \emph{quasi-categories} as developed by Joyal \cite{Joyal_QuasiCategoriesAndKanComplexes} and Lurie \cite{LurieHTT}, which provides a complete and rigorous theory of $\infty$-categories. Other references are \cite{Cisinski_HigherCategoriesAndHomotopicalAlgebra, RiehlVerity_ElementsOfInftyCategoryTheory}.

\medskip

\paragraph{\textbf{Notational}}
We adhere to the following terminological and notational conventions. We use the word \emph{space} to refer to an $\infty$-groupoid or abstract homotopy type, and denote the large $\infty$-category of small spaces by $\mathcal{S}$ and the very large $\infty$-category of large
spaces by $\hat{\Sp}$. An arrow $f\colon X\to Y$ is called an \emph{equivalence} if it has a two-sided inverse $g\colon Y\to X$ such that $gf\simeq 1_X$ and $fg\simeq 1_Y$. The adjective \emph{unique} is always understood to mean unique up to a contractible space of choices. A diagram \emph{commutes} if there are coherent homotopies filling all of its simplices, specified as part of the data. We follow the standard convention of leaving these homotopies implicit when they are uniquely determined (as in the case of compositions) or provided by a universal property (as in the case of limits). All categorical constructions, including limits, mapping objects, and adjunctions, are to be understood in their $\infty$-categorical (homotopical) sense. When necessary, we implicitly view ordinary 1-categories as $\infty$-categories via the nerve embedding. A terminal object is denoted by $\ast$. For any two objects $X$ and $Y$ in a locally small $\infty$-category $\C$, we write $\C(X,Y)\in\mathcal{S}$ for the space of maps between $X$ and $Y$. A commutative square 
\begin{equation}\label{eq: Cartesian square}
  \begin{tikzcd}
	P & X \\
	Y & Z
	\arrow[from=1-1, to=1-2]
	\arrow[from=1-1, to=2-1]
	\arrow[from=1-2, to=2-2]
	\arrow[from=2-1, to=2-2, "f"]
\end{tikzcd}
\end{equation}
is \textit{Cartesian} if $P$ is the pullback of the diagram $Y\to Z\leftarrow X$. For $a,b\colon T\to X$ generalized points, the \textit{path object} $X(a,b)$ in $\slice{\C}{T}$ is the generalized fiber
\begin{equation}\label{definition: path objects}
    \begin{tikzcd}
    {X(a,b)} \arrow[r] \arrow[d] \arrow[dr, phantom, "\lrcorner"{anchor=center, pos=0.125}] & X \arrow[d, "\Delta"] \\
    T \arrow[r, "{(a,b)}"'] & {X \times X}
\end{tikzcd}.
\end{equation}
Note that by a Fubini argument, the path object of a single point $a\colon T\to X$ is the loop object $X(a,a)\simeq\Omega_aX$ in $\slice{\C}{T}$.

We record a few basic categorical properties for future reference:
\begin{enumerate}
    \item\label{item: Hom C vs slice} For $Y,Z\in \C$ and $X\in\slice{\C}{Z}$, there is an equivalence of homotopy types $\C(X,Y)\simeq\slice{\C}{Z}(X,Y\times Z)$.
    \item\label{item: pullback of a product with X} For any map $X\to A$, the square \eqref{eq: Cartesian square} is Cartesian if and only if the following induced square is Cartesian:
\[\begin{tikzcd}
	P & X \\
	{Y\times A} & {Z\times A}
	\arrow[from=1-1, to=1-2]
	\arrow[from=1-1, to=2-1]
	\arrow[from=1-2, to=2-2]
	\arrow[from=2-1, to=2-2, "f\times A"]
\end{tikzcd}.\]
\item\label{item: Slice of slice} Let $f\colon X\to Y$ be a map. There is an equivalence of slice $\infty$-categories $\slice{(\slice{\C}{Y})}{f}\simeq\slice{\C}{X}$.
\end{enumerate}
Finally, we record a rigidity property of slices admitting a retraction:
\begin{lemma}\label{lemma: retract rigidity} 
    Let $\C$ be an $\infty$-category and $i\colon A\to S$ a morphism admitting a retraction $r\colon S\to A$ (so $r\circ i\simeq 1_A$). Let $B\in\slice{\C}{S}$ and suppose there exists an equivalence $\varphi\colon A\xrightarrow{\sim}B$ in $\slice{\C}{S}$. Then every morphism $A\to B$ in $\slice{\C}{S}$ is an equivalence.
\end{lemma}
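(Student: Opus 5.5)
The plan is to reduce to an endomorphism of $A$ over $S$ and then use the retraction to show that its underlying map is homotopic to the identity. Here $A$ is regarded as an object of $\slice{\C}{S}$ via its structure map $i$. Let $f\colon A\to B$ be any morphism in $\slice{\C}{S}$, and choose an inverse $\varphi^{-1}\colon B\to A$ of the given equivalence $\varphi$ in $\slice{\C}{S}$. Set $g:=\varphi^{-1}\circ f\colon A\to A$, a morphism in $\slice{\C}{S}$. Since $f\simeq\varphi\circ g$ and equivalences are closed under composition, it suffices to show that $g$ is an equivalence in $\slice{\C}{S}$.

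A morphism in $\slice{\C}{S}$ consists of a map in $\C$ together with a homotopy witnessing compatibility with the structure maps. For $g$ this gives a map $g\colon A\to A$ in $\C$ and a homotopy $i\circ g\simeq i$. Postcomposing this homotopy with $r$ and using $r\circ i\simeq 1_A$ yields a chain of homotopies in $\C$:
\[ g\simeq (r\circ i)\circ g\simeq r\circ(i\circ g)\simeq r\circ i\simeq 1_A. \]
Hence the underlying map of $g$ in $\C$ is homotopic to the identity, and in particular it is an equivalence in $\C$.

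It remains to lift this to $\slice{\C}{S}$. I would invoke the standard fact that the forgetful functor $\slice{\C}{S}\to\C$ is conservative: a morphism of the slice is an equivalence as soon as its underlying morphism in $\C$ is one. This holds because the forgetful functor is a right fibration, and right fibrations are conservative (see \cite{LurieHTT}). Applying it to $g$ shows that $g$ is an equivalence in $\slice{\C}{S}$, and therefore so is $f\simeq\varphi\circ g$.

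The only step needing care is the homotopy coherence of the slice. The homotopy $i\circ g\simeq i$ carried by $g$ need not be the trivial one, so we cannot conclude that $g\simeq 1_A$ \emph{as a morphism of $\slice{\C}{S}$}. The argument deliberately avoids this: it uses only that the underlying map is an equivalence in $\C$, and conservativity of the forgetful functor then upgrades this to an equivalence in the slice. This is exactly the rigidity the statement claims, namely that every morphism $A\to B$ is an equivalence, not that every such morphism is homotopic to $\varphi$.
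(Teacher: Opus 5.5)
Your proof is correct and follows the paper's argument: form $\varphi^{-1}\circ f$, whisker its structure homotopy $i\circ(\varphi^{-1}\circ f)\simeq i$ with $r$ to get $\varphi^{-1}\circ f\simeq 1_A$ in $\C$, and conclude via $f\simeq\varphi\circ(\varphi^{-1}\circ f)$. The only difference is that you explicitly invoke conservativity of the right fibration $\slice{\C}{S}\to\C$ to pass from an equivalence in $\C$ to one in the slice, a step the paper leaves implicit.
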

\begin{proof}
    Let $f\colon A\to B$ be a morphism in $\slice{\C}{S}$, and set $u\coloneq \varphi ^{-1}\circ f\colon A\to A$, an endomorphism of $A$ over $S$. Its underlying morphism comes equipped with a homotopy $i\circ u\simeq i$. Whiskering with $r$ gives \[u\simeq (r\circ i)\circ u\simeq r\circ(i\circ u)\simeq r\circ i\simeq 1_A.\] Hence $u$ is an equivalence, and therefore so is $f\simeq\varphi\circ u$.
\end{proof}
\subsection{Fibrations}\label{subsection: fibrations}
Right fibrations over $\C$ encode presheaves $\C^{op}\to \hat{\Sp}$ and bifibrations over $\C\times\D$ encode functors $\C^{op}\times\D\to\hat{\Sp}$. The corresponding equivalences of $\infty$-categories \cite[Theorem~2.2.1.2]{LurieHTT}: 
\begin{equation}\label{eq: (un)straightening}
    \mathscr{R}\mathrm{Fib}(\C)\simeq\Fun(\C^\op,\hat{\Sp}),\qquad\mathscr{B}\mathrm{Fib}(\C\times\D)\simeq\Fun(\C^\op\times\D,\hat{\Sp})
\end{equation}
are the \emph{straightening} equivalences, their inverses are the \emph{unstraightening} equivalences. We explain the typical examples which will guide our internal approach. We denote by $\Orb_\C\coloneq\Fun([1],\C)$ the arrow $\infty$-category. If $S$ is a class of maps in $\C$, we denote by $\Orb_\C^S\subseteq\Orb_\C$ the full subcategory on $S$. We write $\Orb_\C^{(S)}$ for the wide subcategory of $\Orb_\C^S$ whose morphisms are the Cartesian squares.
\begin{example}\label{examples: fibrations}
\begin{enumerate}[label=\arabic*., ref=\thelemma.\arabic*]
    \item There is a bifibration $(\mathrm{dom,cod})\colon\Orb_\C\to\C\times \C$ given by the domain and codomain fibrations \cite[Cor.~2.4.7.11]{LurieHTT}. This bifibration corresponds to the mapping space presheaf $\C(-,-)\colon\C^\op\times\C\to\Sp\subseteq \hat{\Sp}$.
    \item\label{examples: fibrations 2} Let $X\in\C$ be an object. The slice $\infty$-category over $X$ is obtained as the following pullback of $\infty$-categories:
\[\begin{tikzcd}
	{\slice{\C}{X}}\arrow[dr, phantom, "\lrcorner"{anchor=center, pos=0.125}] \arrow[r] \arrow[d, "{\Sigma_X}"'] & {\Orb_\C} \arrow[d] \\
	\C \arrow[r, "{(1_\C,\cst_X)}"'] & {\C\times\C}
\end{tikzcd}.\]
The left vertical map $\Sigma_X\colon \slice{\C}{X}\to\C$ is a right fibration \cite[Cor.~2.1.2.2]{LurieHTT}, and corresponds to the representable presheaf $\C(-,X)\colon \C^\op\to\Sp\subseteq \hat{\Sp}$. If $f\colon X\to Y$ is a map, one obtains a functor $\Sigma_f\colon\slice{\C}{X}\to\slice{\C}{Y}$, called the \textit{dependent sum}, by working in the slice over $Y$ and using \prelref{item: Slice of slice}.
\item The composition \[\mathrm{cod}\colon \Orb_\C^{(\mathrm{all})}\to \Orb_\C\xrightarrow{\mathrm{cod}}\C\] is a right fibration \cite[6.1.3.4]{LurieHTT} corresponding to the presheaf $(\slice{\C}{-})^\simeq\colon \C^{op}\to \hat{\Sp}$.
\end{enumerate}
\end{example}
\begin{definition}\label{def: representable fibration}
    A right fibration $\mathscr{R}\to\C$ is \textit{representable} if it is equivalent to $\slice{\C}{X}\to\C$ for some $X\in\C$, called the \textit{representing} object.
\end{definition}
Representable fibrations thus correspond precisely to representable functors. Here is a simple criterion to recognize representable fibrations.
\begin{prop}[{\cite[Prop.~4.4.4.5]{LurieHTT}}]\label{prop: representable right fibrations and terminal objects}
    A right fibration $p\colon \mathscr{R}\to\C$ is equivalent to $\slice{\C}{X}$ if and only if $\mathscr{R}$ has a terminal object. In that case $X$ is the image $p(\ast_{\mathscr{R}})$ of the terminal object.
\end{prop}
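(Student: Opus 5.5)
We prove both implications directly, using only the defining properties of right fibrations and the fact that slice $\infty$-categories have terminal objects. The paper is explicit that the Yoneda lemma must not be invoked, and this route avoids it.

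\emph{Only if.} Suppose $\mathscr{R}\simeq\slice{\C}{X}$ over $\C$. The slice $\slice{\C}{X}$ has the terminal object $1_X\colon X\to X$. Indeed, for any object $(Y,g\colon Y\to X)$, the mapping space $\slice{\C}{X}(g,1_X)$ is the fiber of the composition map $\C(Y,X)\to\C(Y,X)$, $h\mapsto 1_X\circ h$, over $g$. This map is an equivalence, so the fiber is contractible. Terminal objects are transported along equivalences, so $\mathscr{R}$ has a terminal object, and its image under $p$ is $\Sigma_X(1_X)=X$.

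\emph{If.} Let $t\in\mathscr{R}$ be terminal and set $X\coloneq p(t)$. The first step is to build a comparison functor $F\colon\mathscr{R}\to\slice{\C}{X}$ over $\C$. Since $t$ is terminal, the slice projection $\slice{\mathscr{R}}{t}\to\mathscr{R}$ is a trivial fibration, so we can choose a section $s$. We then compose $s$ with the functor $\slice{\mathscr{R}}{t}\to\slice{\C}{p(t)}$ induced by $p$. The resulting $F$ sends $r$ to $p(r\to t)\colon p(r)\to X$, and it commutes with the projections to $\C$.

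The second step is to show that $F$ is an equivalence. Both $\mathscr{R}$ and $\slice{\C}{X}$ are right fibrations over $\C$ (\cref{examples: fibrations 2}). A map of right fibrations over $\C$ is an equivalence as soon as it induces equivalences on fibers over every $c\in\C$, since fiberwise equivalences between right fibrations are equivalences. The fiber of $\slice{\C}{X}$ over $c$ is $\C(c,X)$. The fiber $\mathscr{R}_c$ is a space, and we must show that the map
\[\mathscr{R}_c\to\C(c,X),\qquad r\mapsto p(r\to t)\]
is an equivalence. For $r\in\mathscr{R}_c$, the right fibration property gives a fiber sequence
\[\mathscr{R}_c(r,r')\to\mathscr{R}(r,t)\to\C(c,X),\]
and more usefully it identifies $\mathscr{R}(r,t)$ with the fiber of $p$ computed via lifts. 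Here is the concrete argument. Fix $g\colon c\to X$. Because $p$ is a right fibration, $g$ admits a Cartesian lift $g^*t\to t$ with $g^*t\in\mathscr{R}_c$, and the space of pairs $(r\in\mathscr{R}_c,\ \text{a map } r\to t \text{ over } g)$ is contractible. Since $t$ is terminal, a map $r\to t$ over $g$ amounts to the condition $p(r\to t)\simeq g$ on the unique map. Hence the fiber of $\mathscr{R}_c\to\C(c,X)$ over $g$ is exactly this contractible space of lifts, and the map on fibers is an equivalence.

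The main obstacle is making the fiber identification in this last step coherent rather than pointwise. We will handle it by phrasing everything through the pullback square that compares $\slice{\mathscr{R}}{t}\simeq\mathscr{R}$ with $\slice{\C}{X}\times_\C\mathscr{R}$, and then using the standard fact (HTT 2.1.2.1 / 2.4.4) that for a right fibration $p$ the map $\slice{\mathscr{R}}{t}\to\slice{\C}{p(t)}\times_{\C}\mathscr{R}$ is a trivial fibration. The left side is equivalent to $\mathscr{R}$ because $t$ is terminal. So $\mathscr{R}\simeq\slice{\C}{X}\times_\C\mathscr{R}$ over $\C$. Since the diagonal of a right fibration over a space-valued fiber collapses, projecting to $\slice{\C}{X}$ gives the equivalence and identifies $X=p(t)$.
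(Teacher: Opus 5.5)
The \emph{only if} direction and the construction of $F=q\circ s$ are fine, where $q\colon\slice{\mathscr{R}}{t}\to\slice{\C}{X}$ is induced by $p$. The trouble is the step you yourself single out as the main obstacle: you resolve it with a false claim.

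For a right fibration $p$, the map $\slice{\mathscr{R}}{t}\to\slice{\C}{p(t)}\times_{\C}\mathscr{R}$ is only a right fibration, which is what HTT~2.1.2.1 gives. It is not a trivial fibration. Already on objects, a point of the target is a pair $(r,\ g\colon p(r)\to X)$. A preimage would be a morphism $r\to t$ lying over $g$ with \emph{both} endpoints prescribed, i.e.\ a lift against $\partial\Delta^1\subseteq\Delta^1$, and right fibrations do not provide such lifts. Accordingly, your conclusion $\mathscr{R}\simeq\slice{\C}{X}\times_{\C}\mathscr{R}$ over $\C$ is false. Take $\mathscr{R}=\slice{\C}{X}$ and $t=1_X$: the fiber of the right-hand side over $c$ is $\C(c,X)\times\C(c,X)$, which is not equivalent to $\C(c,X)$ unless $X$ is subterminal. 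The closing sentence about the diagonal ``collapsing'' is not an argument, so as written the proof of the \emph{if} direction is not closed.

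The fact you need is that $q\colon\slice{\mathscr{R}}{t}\to\slice{\C}{p(t)}$ itself, with no factor $\mathscr{R}$, is a trivial fibration. A lifting problem of $\partial\Delta^m\subseteq\Delta^m$ against $q$ is the same as a lifting problem of $\partial\Delta^m\star\Delta^0=\Lambda^{m+1}_{m+1}\subseteq\Delta^{m+1}$ against $p$. This is solvable because $p$ is a right fibration.

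Now $\pi\colon\slice{\mathscr{R}}{t}\to\mathscr{R}$ is a trivial fibration because $t$ is terminal, and both $\pi$ and $q$ commute strictly with the projections to $\C$. Hence $\mathscr{R}\xleftarrow{\pi}\slice{\mathscr{R}}{t}\xrightarrow{q}\slice{\C}{X}$ is a zig-zag of equivalences over $\C$. So $F=q\circ s$ is an equivalence outright, since $s$ is an inverse of $\pi$. No fiberwise check is needed and no coherence issue arises. This is the standard argument behind Lurie's HTT~4.4.4.5; the paper itself only cites that result and gives no proof of its own.

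Your pointwise computation is really the statement that the fibers of $q$ are contractible. Its assembly over $r\in\mathscr{R}_c$ is exactly the factorization $F_c=q_c\circ s_c$, with $q_c$ and $\pi_c$ trivial fibrations. Replace your last paragraph with that observation and the proof is complete.
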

The fibrational point of view yields conservativity and essential injectivity of the Yoneda functor $\C\to\Psh(\C)$, which replaces the Yoneda lemma throughout the paper.
\begin{prop}\label{prop: equivalence is detected on representables}
    Let $\C$ be an $\infty$-category.
    \begin{enumerate}[label=\arabic*., ref=\thetheorem.\arabic*]
        \item\label{detection: maps} A morphism $f\colon X\to Y$ is an equivalence if and only if $\Sigma_f\colon \slice{\C}{X}\to\slice{\C}{Y}$ is an equivalence. Equivalently, if and only if $\C(T,f)\colon\C(T,X)\to\C(T,Y)$ is an equivalence for every $T\in\C$.
        \item\label{detection: objects} Two objects $X,Y$ are equivalent if and only if the right fibrations $\slice{\C}{X}$ and $\slice{\C}{Y}$ are equivalent over $\C$, equivalently if and
        only if $\C(-,X)\simeq\C(-,Y)$ as functors $\C^\op\to\Sp$.
    \end{enumerate}
\end{prop}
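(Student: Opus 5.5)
The plan is to avoid the Yoneda lemma entirely and argue through the right fibrations $\Sigma_X\colon\slice{\C}{X}\to\C$ together with \cref{prop: representable right fibrations and terminal objects}.

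For \prelref{detection: maps}, one direction is immediate. If $f$ is an equivalence with inverse $g$, then $\Sigma_g$ is inverse to $\Sigma_f$, since $\Sigma$ is functorial up to coherent equivalence: $\Sigma_{g\circ f}\simeq\Sigma_g\circ\Sigma_f$ and $\Sigma_{1_X}\simeq 1$.

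For the converse, suppose $\Sigma_f\colon\slice{\C}{X}\to\slice{\C}{Y}$ is an equivalence. It commutes with the projections to $\C$, so it is an equivalence of right fibrations over $\C$. An equivalence of $\infty$-categories preserves terminal objects, so it sends $(X,1_X)$, which is terminal in $\slice{\C}{X}$, to a terminal object of $\slice{\C}{Y}$. That image is $(X,f)$. But $(Y,1_Y)$ is also terminal, so $(X,f)\simeq(Y,1_Y)$ in $\slice{\C}{Y}$. An equivalence in the slice lies over an equivalence in $\C$ compatible with the structure maps, so the map $X\to Y$ of this equivalence is $1_Y\circ(\text{it})\simeq f$. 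Hence $f$ is an equivalence.

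To get the formulation with mapping spaces, note that the fiber of $\Sigma_X$ over $T$ is $\C(T,X)$, and that $\Sigma_f$ induces $\C(T,f)$ on fibers over $T$. A map of right fibrations over $\C$ is an equivalence if and only if it is a fiberwise equivalence, by the straightening equivalence \eqref{eq: (un)straightening} or directly from \cite[Prop.~2.2.3.8]{LurieHTT}/the standard fiberwise criterion for maps of right fibrations. This gives the equivalence of the two conditions.

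For \prelref{detection: objects}, one direction is clear: if $X\simeq Y$, then $\Sigma$ of that equivalence is an equivalence over $\C$. Conversely, let $\Phi\colon\slice{\C}{X}\xrightarrow{\sim}\slice{\C}{Y}$ be an equivalence over $\C$. Then $\Phi$ preserves terminal objects, so $\Phi(X,1_X)\simeq(Y,1_Y)$. Applying the projection to $\C$ and using that $\Phi$ lies over $\C$ gives $X\simeq p(\Phi(\ast))\simeq Y$. This is exactly the identification of the representing object in \cref{prop: representable right fibrations and terminal objects}. The functor formulation then follows by straightening, which identifies $\slice{\C}{X}$ with $\C(-,X)$ as in \cref{examples: fibrations 2}.

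The main obstacle is the converse in part 1. One must check that the terminal object $(X,1_X)$ maps to $(X,f)$, and that an equivalence in the slice forces the underlying map to be homotopic to $f$ rather than merely to some equivalence. This is handled by noting that the structure map of $(Y,1_Y)$ is the identity, so the underlying map of the equivalence $(X,f)\to(Y,1_Y)$ equals $f$ up to the specified homotopy.
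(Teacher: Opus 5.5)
Your proposal is correct and follows the paper's proof. In both parts the key step is that an equivalence $\slice{\C}{X}\simeq\slice{\C}{Y}$ preserves terminal objects. In part 1 this gives $\Sigma_f(X,1_X)=(X,f)\simeq(Y,1_Y)$, which forces $f$ to be an equivalence. In part 2 you project $\Phi(X,1_X)\simeq(Y,1_Y)$ to $\C$ to get $X\simeq Y$. You also spell out the reformulations in terms of $\C(T,f)$ and $\C(-,X)$, via the fiberwise criterion for maps of right fibrations and straightening; the paper leaves these implicit.
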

\begin{proof}
    For (1): Suppose $\Sigma_f\colon\slice{\C}{X}\to\slice{\C}{Y}$ is an equivalence of right fibrations. Then it preserves terminal objects, so $f\simeq\Sigma_f(1_X)\simeq 1_Y$ is an equivalence. The converse is clear.
    
    For (2): Suppose that there is an equivalence of right fibrations $\slice{\C}{X}\simeq\slice{\C}{Y}$ over $\C$. By \cref{prop: representable right fibrations and terminal objects}, the image $\Sigma_X(1_X)=X$ must be equivalent to $Y$. The converse is Proposition \ref{detection: maps} applied to an equivalence $X\to Y$.
    \end{proof}
\subsection{Monomorphisms}\label{subsection: monomorphisms}
\begin{definition}
    Let $\C$ be an $\infty$-category. An object $X\in \C$ is \textit{$(-1)$-truncated} if for every other object $Z\in \C$, the mapping space $\C(Z,X)$ is empty or contractible. A map $f\colon X\to Y$ is \textit{$(-1)$-truncated} if it is so as an object of the slice $\slice{\C}{Y}$. 
\end{definition}
\begin{notation}
    A $(-1)$-truncated object is also called \textit{subterminal}, and a $(-1)$-truncated map a \textit{monomorphism}. 
\end{notation}
\begin{prop}\label{prop: characterization monomorphisms}
Let $f\colon X\to Y$ be a map of an $\infty$-category $\C$. The following are equivalent:
\begin{enumerate}[label=\arabic*., ref=\thetheorem.\arabic*]
    \item $f$ is a monomorphism;
    \item\label{prop: characterization monomorphisms 2} the functor $\Sigma_f\colon \slice{\C}{X}\to\slice{\C}{Y}$ is fully faithful;
    \item\label{prop: characterization monomorphisms 3} The functor $\slice{\C}{X}\to\slice{\C}{X}\times_{\slice{\C}{Y}}\slice{\C}{X}$ is an equivalence.
\end{enumerate}
If the pullback $X\times _YX$ exists, then those are equivalent to:
\begin{enumerate}[resume, label=\arabic*., ref=\thetheorem.\arabic*]
    \item\label{prop: characterization monomorphisms 4} the diagonal $X\to X\times_YX$ is an equivalence;
    \item\label{prop: characterization monomorphisms 5} the following commutative square is Cartesian:
\[\begin{tikzcd}
	X & Y \\
	{X\times X} & {Y\times Y}
	\arrow["f", from=1-1, to=1-2]
	\arrow["{\Delta_X}"', from=1-1, to=2-1]
	\arrow["{\Delta_Y}", from=1-2, to=2-2]
	\arrow["{f\times f}", from=2-1, to=2-2]
\end{tikzcd}.\]
\end{enumerate}
\end{prop}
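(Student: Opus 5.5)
The plan is to reduce everything to statements about mapping spaces and slices. Throughout I would use that $\Sigma_f$ identifies $\slice{\C}{X}$ with the $\infty$-category of pairs $(T\to Y)$ together with a lift through $f$. Concretely, for objects $a\colon T\to X$ and $b\colon T'\to X$ of $\slice{\C}{X}$, the mapping space $\slice{\C}{X}(a,b)$ is the fiber of $\C(T,T')\to\C(T,X)$ over $a$. Likewise $\slice{\C}{Y}(fa,fb)$ is the fiber of $\C(T,T')\to\C(T,Y)$ over $fa$.

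\textbf{(1)$\Leftrightarrow$(2).} Comparing the two fiber sequences, the map $\slice{\C}{X}(a,b)\to\slice{\C}{Y}(fa,fb)$ has, over each $g\colon T\to T'$, fiber equal to the path space between $bg$ and $a$ in the fiber of $\C(T,f)$ over $fa$. In other words it is the space of lifts compatible with $f$. If $f$ is a monomorphism, then $\slice{\C}{Y}(T\xrightarrow{fa}Y,\,X\xrightarrow{f}Y)$ is contractible. By pasting, the fibers of $\C(T,X)\to\C(T,Y)$ are empty or contractible, i.e.\ $\C(T,f)$ is $(-1)$-truncated, so all the path spaces above are contractible and $\Sigma_f$ is fully faithful. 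Conversely, take $b=1_X$. Full faithfulness gives $\slice{\C}{Y}(Z\xrightarrow{fa}Y,\,f)\simeq\slice{\C}{X}(a,1_X)\simeq\ast$. Hence for every $Z\to Y$ the mapping space into $f$ is empty or contractible (empty exactly when the map does not factor through $f$), which is the definition of a monomorphism.

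\textbf{(2)$\Leftrightarrow$(3).} A functor $F$ is fully faithful iff the diagonal $\slice{\C}{X}\to\slice{\C}{X}\times_{\slice{\C}{Y}}\slice{\C}{X}$ is an equivalence. Essential surjectivity of the diagonal reduces, on objects, to asking that any $a,a'$ with $fa\simeq fa'$ be equivalent over $X$ compatibly. On mapping spaces, the fiber product has mapping spaces $M\times_{N}M$ with $M=\slice{\C}{X}(a,b)$ and $N=\slice{\C}{Y}(fa,fb)$, and the diagonal $M\to M\times_NM$ is an equivalence iff $M\to N$ is $(-1)$-truncated. This is not quite the full faithfulness I want, so I would rather argue via (1): $\C(T,f)$ is $(-1)$-truncated for all $T$ iff its diagonal $\C(T,X)\to\C(T,X)\times_{\C(T,Y)}\C(T,X)$ is an equivalence. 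Since the objects of the fiber product category over a fixed $T$ are exactly such pairs, (3) is equivalent to this condition for every $T$, i.e.\ to (1). This bookkeeping — identifying the fiber product $\infty$-category fiberwise over $\C$ through the right fibrations $\Sigma$ — is where I expect the main care to be needed. I would handle it by noting that all three categories are right fibrations over $\C$, so an equivalence can be tested on fibers over each $T$, and those fibers are $\C(T,X)$ and $\C(T,X)\times_{\C(T,Y)}\C(T,X)$.

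\textbf{(1)$\Leftrightarrow$(4)$\Leftrightarrow$(5).} When $X\times_YX$ exists, $\C(T,X\times_YX)\simeq\C(T,X)\times_{\C(T,Y)}\C(T,X)$. So (4) holds iff for every $T$ the diagonal of $\C(T,f)$ is an equivalence, by Proposition \ref{detection: maps}, and by the previous step this is (1). For (5), I would use the pasting identification $X\times_YX\simeq (X\times X)\times_{Y\times Y}Y$, which is checked on mapping spaces as a standard Fubini argument. Under it, the comparison map from $X$ to the pullback of the square is exactly the diagonal $X\to X\times_YX$. Thus the square is Cartesian iff (4) holds.
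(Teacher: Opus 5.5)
Your proof is correct, but it is organized differently from the paper's. The paper treats (1)$\Leftrightarrow$(2)$\Leftrightarrow$(3) and (4)$\Leftrightarrow$(5) as standard. It writes out only the step it regards as Yoneda-dependent, namely (3)$\Leftrightarrow$(4): it identifies $\slice{\C}{X}\times_{\slice{\C}{Y}}\slice{\C}{X}$ with $\slice{\C}{X\times_YX}$, so that the functor in (3) becomes $\Sigma_{\Delta_f}$, and then applies Proposition~\ref{detection: maps} to $\Delta_f$.

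You instead use the mapping-space condition ``$\C(T,f)$ is $(-1)$-truncated for every $T$'' as a hub:
\begin{itemize}
\item (3) is compared to it fiberwise, using that source and target of the diagonal functor are right fibrations over $\C$ with fibers $\C(T,X)$ and $\C(T,X)\times_{\C(T,Y)}\C(T,X)$;
\item (4) is compared to it through the mapping-space form of Proposition~\ref{detection: maps}.
\end{itemize}
The underlying principle is the same: equivalences of right fibrations over $\C$ are detected on fibers, i.e.\ on representables. Your route is self-contained, and it proves (1)$\Leftrightarrow$(2)$\Leftrightarrow$(3) without any pullbacks in $\C$, as the statement requires. The paper's only written link to (3), by contrast, passes through $X\times_YX$. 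In exchange, the paper's version is a one-line reduction that pinpoints exactly where detection on representables replaces the Yoneda lemma.

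One sentence should be removed. The opening claim of your (2)$\Leftrightarrow$(3) paragraph, that a functor is fully faithful if and only if its diagonal is an equivalence, is false in general. That condition characterizes monomorphisms of $\infty$-categories, and $\{0\}\sqcup\{1\}\to[1]$ is a monomorphism without being fully faithful. The claim does hold for $\Sigma_f$, because $\Sigma_f$ is a right fibration. Your actual argument, which proves (1)$\Leftrightarrow$(3), never uses it.
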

\begin{proof}
    The only step that would ordinarily use Yoneda is $(3)\Leftrightarrow(4)$: $\Delta_f\colon X\to X\times_Y X$ is an equivalence if and only if $\slice{\C}{X}\to\slice{\C}{X\times_YX}$ is an equivalence. This holds by the detection of equivalences on representables (Proposition \ref{detection: maps}).
\end{proof}
We record two lemmas detecting when a monomorphism is an equivalence.
\begin{lemma}\label{lemma: mono with section is an equivalence}
    Let $\C$ be an $\infty$-category. A monomorphism $f\colon A\to X$ in $\C$ admitting a section is an equivalence.
\end{lemma}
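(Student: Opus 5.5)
The plan is to show that the section $s\colon X\to A$ of $f$ (so $f\circ s\simeq 1_X$) is a two-sided inverse. It suffices to produce a homotopy $s\circ f\simeq 1_A$, and the monomorphism hypothesis is exactly what supplies it.

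Consider the two maps $s\circ f$ and $1_A$ from $A$ to $A$. Postcomposing with $f$ gives
\[
f\circ(s\circ f)\simeq (f\circ s)\circ f\simeq f \quad\text{and}\quad f\circ 1_A\simeq f .
\]
So both maps define objects of $\slice{\C}{X}$ lying over the same object $(A,f)$. More precisely, view $(A,f)$ and $(A,f)$ as objects of $\slice{\C}{X}$. Then $s\circ f$, together with the homotopy $f\circ s\circ f\simeq f$, is a morphism $(A,f)\to(A,f)$ in $\slice{\C}{X}$, and so is $1_A$.

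Since $f$ is a monomorphism, $(A,f)$ is $(-1)$-truncated in $\slice{\C}{X}$. Hence the mapping space $\slice{\C}{X}((A,f),(A,f))$ is empty or contractible, and it is nonempty because it contains the identity. It is therefore contractible, so the two morphisms above agree in $\slice{\C}{X}$. Forgetting to $\C$, via the map of mapping spaces $\slice{\C}{X}((A,f),(A,f))\to\C(A,A)$, gives a homotopy $s\circ f\simeq 1_A$. Together with $f\circ s\simeq 1_X$, this shows $f$ is an equivalence with inverse $s$.

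Alternatively, one can use \cref{lemma: retract rigidity}. The map $f$ has the section $s$, so the identity $1_A$ exhibits $(A,f)$ as trivially equivalent to itself over $X$; but that lemma needs a retraction of the structure map, which we do not have here. So the direct truncation argument above is the cleaner route.

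The only point requiring care is the bookkeeping of the homotopy $f\circ s\circ f\simeq f$ that makes $s\circ f$ a morphism over $X$. This homotopy is just $f\circ s\simeq 1_X$ whiskered by $f$, so no genuine obstacle arises.
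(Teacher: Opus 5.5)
Your proof is correct and is essentially the paper's own argument. Both exhibit $s\circ f$, with the whiskered homotopy $f(sf)\simeq(fs)f\simeq f$, and $1_A$ as two points of $\slice{\C}{X}(f,f)$; that space is contractible because $f$ is $(-1)$-truncated, which forces $sf\simeq 1_A$.
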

\begin{proof}
    Let $g\colon X\to A$ satisfy $fg\simeq 1_X$. The homotopy fiber of $\C(A,f)\colon\C(A,A)\to\C(A,X)$ over $f$ is the mapping space $\slice{\C}{X}(f,f)$, a point of it being a map $A\to A$ together with a homotopy witnessing that it lies over $f$. It is inhabited by $1_f$, and also by $gf$, via $f(gf)\simeq(fg)f\simeq f$. Since $f$ is $(-1)$-truncated, $\slice{\C}{X}(f,f)$ is contractible, so $gf\simeq 1_A$.
\end{proof}
\begin{prop}\label{prop: an equivalence that factors with a mono is an equivalence}
    Let $\C$ be an $\infty$-category, and $X\xrightarrow{f}Y\xrightarrow{g}Z$ be maps in $\C$ such that $g$ is a monomorphism and $g\circ f$ is an equivalence. Then $f$ and $g$ are equivalences. 
\end{prop}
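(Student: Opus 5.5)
The plan is to construct a section of $g$ and then invoke \cref{lemma: mono with section is an equivalence}. Since $g\circ f$ is an equivalence, choose an inverse $h\colon Z\to X$ with $(g f) h\simeq 1_Z$ and $h(gf)\simeq 1_X$. Then $s\coloneq f\circ h\colon Z\to Y$ satisfies $g\circ s = g f h\simeq 1_Z$, so $s$ is a section of $g$. Since $g$ is a monomorphism admitting a section, \cref{lemma: mono with section is an equivalence} shows that $g$ is an equivalence.

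Once $g$ is known to be an equivalence, $f$ follows formally. Write $f\simeq g^{-1}\circ(g\circ f)$, using the homotopy $g^{-1}g\simeq 1_Y$. This exhibits $f$ as a composite of two equivalences, hence an equivalence. Alternatively, $f$ has left inverse $h g$, since $(hg)f=h(gf)\simeq 1_X$. It also has right inverse $g^{-1}(gf)h\simeq g^{-1}$ up to the obvious homotopies. Having both a left and a right inverse, $f$ is an equivalence.

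I do not expect a real obstacle. The only subtle point is the one handled in \cref{lemma: mono with section is an equivalence}: upgrading the one-sided inverse of $g$ to a two-sided one, which is where $(-1)$-truncatedness is used. Here we merely produce the section $s$. Everything else is bookkeeping of homotopies, which the paper's conventions leave implicit. No Yoneda-type argument is needed, which keeps the proof within the paper's stated constraint.
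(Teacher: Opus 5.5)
Your proof is correct and is exactly the paper's argument: $f\circ(gf)^{-1}$ is a section of $g$, the lemma on monomorphisms admitting a section makes $g$ an equivalence, and two-out-of-three (your $f\simeq g^{-1}\circ(gf)$) then gives $f$. One slip in your ``alternative'' remark: $g^{-1}(gf)h$ is a map $Z\to Y$, so it cannot be a right inverse of $f\colon X\to Y$. The correct inverse of $f$ is $hg$, since $f(hg)=(fh)g\simeq g^{-1}g\simeq 1_Y$ once $g$ is invertible, but your first argument already suffices.
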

\begin{proof}
   The composition $f\circ (gf)^{-1}$ is a section of $g$. By \cref{lemma: mono with section is an equivalence}, $g$ is an equivalence. By two-out-of-three, $f$ is one as well.
\end{proof}
\subsection{Locally Cartesian closed $\infty$-categories}\label{subsection: LCCC}
Let $f\colon X\to Z$ be a map in an $\infty$-category and consider the dependent sum functor $\Sigma_f\colon \slice{\C}{X}\to\slice{\C}{Z}$. If $\C$ has pullbacks, the pullback functor $f^\ast\colon \slice{\C}{Z}\to\slice{\C}{X}$ is right adjoint to $\Sigma_f$.
\begin{definition}
    An $\infty$-category $\C$ is \textit{locally Cartesian closed} if the pullback functor $f^\ast$ has a right adjoint for all maps $f\colon X\to Z$ in $\C$.
\end{definition} 
When that is the case, the right adjoint is called the \textit{dependent product}, denoted $\prod_f$, yielding a triple of adjunctions:
\[\begin{tikzcd}
	{\slice{\C}{X}} && {\slice{\C}{Z}}
	\arrow[""{name=0, anchor=center, inner sep=0}, "{\Sigma_f}", curve={height=-24pt}, from=1-1, to=1-3]
	\arrow[""{name=1, anchor=center, inner sep=0}, "{\prod_f}"', curve={height=24pt}, from=1-1, to=1-3]
	\arrow[""{name=2, anchor=center, inner sep=0}, "{f^\ast}"{description}, from=1-3, to=1-1]
	\arrow["\dashv"{anchor=center, rotate=-90}, draw=none, from=0, to=2]
	\arrow["\dashv"{anchor=center, rotate=-90}, draw=none, from=2, to=1]
\end{tikzcd}.\]
Let $f\colon X\to Z$ be an object of the slice $\slice{\C}{Z}$. The product functor $-\times f$ can be decomposed as the composition $\slice{\C}{Z}\xrightarrow{f^\ast}\slice{\C}{X}\xrightarrow{\Sigma_f}\slice{\C}{Z}$. When $\C$ is locally Cartesian closed, both functors have right adjoints, yielding a right adjoint $(-)^f$ to the product functor. In particular each slice is Cartesian closed. We write $\Map(X,Y)=Y^X$  (resp. $\Map_{/Z}(X,Y)$) for the internal hom object in $\C$ (resp. $\slice{\C}{Z}$). The following lemma relates dependent products and pullback functors arising from a Cartesian square.
\begin{lemma}[{Beck-Chevalley Condition, \cite[Lemma 2.1]{Frey&Rasekh_ConstructingCoproductsInLCCC}}]
    Let $\C$ be a locally Cartesian closed $\infty$-category with pullbacks, and let 
\[\begin{tikzcd}
	P \arrow[r, "h"]\arrow[dr, phantom, "\lrcorner"{anchor=center, pos=0.125}] \arrow[d, "k"'] & A \arrow[d, "f"] \\
	B \arrow[r, "g"'] & C
\end{tikzcd}\]
be a Cartesian square. The canonical natural transformation \[f^\ast\circ \prod_g\xrightarrow[]{\simeq}\prod_h\circ k^\ast\]
is an equivalence.
\end{lemma}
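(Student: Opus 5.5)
The plan is to reduce the statement to its left-adjoint counterpart, which is a direct consequence of the pasting law for pullbacks, and then transport the equivalence back through adjunctions.

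First I will identify the left adjoints of the two composites. Since $\Sigma_f\dashv f^\ast$ and $g^\ast\dashv\prod_g$, the composite $f^\ast\circ\prod_g\colon\slice{\C}{B}\to\slice{\C}{A}$ has left adjoint $g^\ast\circ\Sigma_f\colon\slice{\C}{A}\to\slice{\C}{B}$. Similarly, $\Sigma_k\dashv k^\ast$ and $h^\ast\dashv\prod_h$ show that $\prod_h\circ k^\ast$ has left adjoint $\Sigma_k\circ h^\ast$.

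The canonical transformation $f^\ast\prod_g\to\prod_h k^\ast$ is, by construction, the mate (under these adjunctions) of the base-change transformation
\[\Sigma_k\circ h^\ast\longrightarrow g^\ast\circ\Sigma_f .\]
This transformation is obtained from the unit of $g^\ast$ and the counit of $h^\ast$, using the identification $f\circ h\simeq g\circ k$.

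Next I show that this base-change map is an equivalence. For an object $E\to A$ of $\slice{\C}{A}$:
\begin{itemize}
\item $\Sigma_k h^\ast E$ is $E\times_A P$, regarded over $B$ via $k$;
\item $g^\ast\Sigma_f E$ is $B\times_C E$.
\end{itemize}
The comparison map is the canonical one between these. The composite square
\[E\times_A P\to P\to B \quad\text{over}\quad E\to A\to C\]
is obtained by pasting the pullback of $E$ along $h$ with the given Cartesian square. By the pasting law it is Cartesian, so the map $E\times_AP\to B\times_CE$ is an equivalence over $B$. Hence $\Sigma_k h^\ast\to g^\ast\Sigma_f$ is a pointwise equivalence, and therefore an equivalence of functors.

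Finally, I pass from the left adjoints back to the right adjoints. For $Y\in\slice{\C}{B}$ and any test object $T\in\slice{\C}{A}$, the adjunctions give natural equivalences
\[\slice{\C}{A}(T,f^\ast\textstyle\prod_g Y)\simeq\slice{\C}{B}(g^\ast\Sigma_f T,Y)\simeq\slice{\C}{B}(\Sigma_k h^\ast T,Y)\simeq\slice{\C}{A}(T,\textstyle\prod_h k^\ast Y).\]
By Proposition~\ref{detection: maps} applied in $\slice{\C}{A}$, the canonical map $f^\ast\prod_gY\to\prod_hk^\ast Y$ is then an equivalence.

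I expect the main obstacle to be verifying that the composite of mapping-space equivalences above is indeed the map induced by the canonical transformation, rather than just some abstract equivalence. This is the compatibility of mates with precomposition, and it requires some care with coherences in the $\infty$-setting. I would handle it by appealing to the standard fact that mates correspond under the equivalence between natural transformations of left adjoints and natural transformations of right adjoints, with the direction of the transformation reversed. This correspondence restricts to equivalences, so the right-hand mate is invertible exactly when the left-hand one is.
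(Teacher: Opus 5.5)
Your argument is correct. The paper gives no proof of its own here: the lemma is quoted from Frey and Rasekh and invoked in the proof of Lemma~\ref{Lemma: path in fibered universe are equivalences over the base}. So your proposal is a self-contained substitute for a citation, not an alternative to an in-paper argument.

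It is the standard proof, and it shows where each hypothesis enters. Invertibility of the left-adjoint base-change map $\Sigma_k h^\ast\to g^\ast\Sigma_f$ is pure pullback pasting, valid in any $\infty$-category with pullbacks. Local Cartesian closure is needed only for $\prod_g$ and $\prod_h$ to exist, so that invertibility can be transported to the conjugate.

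You also correctly isolate the one delicate point, which your phrase ``by construction'' slightly understates. The canonical map is usually defined as the mate of $h^\ast f^\ast\simeq k^\ast g^\ast$ with respect to $g^\ast\dashv\prod_g$ and $h^\ast\dashv\prod_h$. Identifying it with the conjugate of your base-change map under the composite adjunctions is an instance of the compatibility of mates with pasting.

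Granting that identification, the conjugation correspondence already sends an invertible transformation to an invertible one; in the $\infty$-setting this is the equivalence $\Fun^{L}\simeq(\Fun^{R})^{\op}$ of \cite[\S5.2.6]{LurieHTT}. Your mapping-space chain with Proposition~\ref{detection: maps} is therefore a second route to the same conclusion, not a needed extra step. It relies on the same identification, but has the appeal of staying within the paper's Yoneda-free detection of equivalences.
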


\subsection{Equivalences}\label{subsection: equivalences}
 When $\C$ is locally Cartesian closed and finitely complete, internal homs have subobjects of equivalences $\Eq(X,Y)\subseteq \Map(X,Y)$, representing the subfunctor $\C^\op\to\Sp$
\[T \mapsto \slice{\C}{T}^\simeq(X\times T,Y\times T)\subseteq \slice{\C}{T}(X\times T, Y\times T).\] 
The object of equivalence can be constructed explicitly as a finite limit of objects built from internal hom objects \cite[Lemma 2.8]{Vergura_LocalizationInTopos}. Although Vergura works in a presentable setting, that hypothesis is never used in the construction. We denote by $\Eq_{/Z}(X,Y)\subseteq\Map_{/Z}(X,Y)\in\slice{\C}{Z}$ the objects of equivalences in the slice over $Z$. Lastly, objects of equivalences are stable under pullback: if $f\colon T\to Z$ is a map, and $X,Y$ are two objects in the slice over $Z$, then 
\begin{equation}\label{eq: object of equivalences is stable under pullback}
    f^\ast\left(\Eq_{/Z}(X,Y)\right)\simeq \Eq_{/T}(f^\ast X,f^\ast Y).
\end{equation}
\subsection{Universes}\label{subsection: universes}
Our exposition differs slightly from the usual sources, again to avoid the Yoneda lemma. The crucial property of a universe is univalence (\cref{Univalence Axiom}), which we obtain immediately from \cref{prop: a universe is an object of equivalences}. That proposition therefore becomes our target. It is proved by Rasekh \cite[Lemma~2.12]{Rasekh_UnivalenceHigherCategoryTheory}, but his argument uses the Yoneda lemma to construct the comparison map \[\slice{\C}{\U}\to\Orb_\C^{(\mathrm{all})}.\] This is avoidable: we construct that map only when $p$ is univalent, directly from the univalence hypothesis rather than via
Yoneda, after which Rasekh's propositions apply verbatim to yield the result.
\begin{definition}[{\cite[Def. 2.1]{Rasekh_UnivalenceHigherCategoryTheory}}]\label{def: univalent map}
Let $\C$ be a finitely complete $\infty$-category. A map $p\colon \U_\ast\to\U$ is \textit{univalent} if it is subterminal in $\Orb_\C^{(\mathrm{all})}$. The subcategory of univalent maps is denoted $\U\mathrm{niv}_\C\coloneq\left(\Orb_\C^{(\mathrm{all})}\right)^{\sleq{-1}}$.
\end{definition}
A univalent map is also called a \textit{universal fibration}, and $\U$ a \textit{universe}.
  \begin{prop}\label{prop: characterization of univalent map}
      Let $p\colon \U_\ast\to \U$ be a map in $\C$. Then the following are equivalent:
      \begin{enumerate}
          \item $p$ is univalent;
          \item The projection $\Sigma_p\colon \slice{\Orb_\C^{(\mathrm{all})}}{p}\to \Orb_\C^{(\mathrm{all})}$ is fully faithful.
      \end{enumerate}
      In that case, there exists a fully faithful functor $\slice{\C}{\U}\to\Orb_\C^{(\mathrm{all})}$.
  \end{prop}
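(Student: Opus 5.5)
The equivalence (1)$\Leftrightarrow$(2) is an instance of \cref{prop: characterization monomorphisms}. Write $\mathcal{O}\coloneq\Orb_\C^{(\mathrm{all})}$. By \cref{def: univalent map}, $p$ is univalent exactly when $p$ is $(-1)$-truncated as an object of $\mathcal{O}$. This means that the unique map $p\to\ast_{\mathcal{O}}$ is a monomorphism, provided $\mathcal{O}$ has a terminal object. In general we cannot assume one, so I would argue directly from the definition instead. The object $p$ is subterminal if and only if every mapping space $\mathcal{O}(q,p)$ is empty or contractible. Now look at the projection $\Sigma_p\colon\slice{\mathcal{O}}{p}\to\mathcal{O}$. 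On mapping spaces it is given, for objects $(q,\alpha)$ and $(q',\alpha')$ over $p$, by the fiber sequence
\[\slice{\mathcal{O}}{p}\bigl((q,\alpha),(q',\alpha')\bigr)\to\mathcal{O}(q,q')\to\mathcal{O}(q,p),\]
where the fiber is taken over $\alpha$. So $\Sigma_p$ is fully faithful if and only if, for all such data, the map $\mathcal{O}(q,q')\to\mathcal{O}(q,p)$ induced by composing with $\alpha'$ is an equivalence onto the path component of $\alpha$, i.e.\ its fiber over $\alpha$ is contractible. This is exactly the argument behind (1)$\Leftrightarrow$(2) in \cref{prop: characterization monomorphisms}, run with a possibly missing terminal object.

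For (1)$\Rightarrow$(2): if $\mathcal{O}(q,p)$ is empty or contractible for every $q$, the fiber over $\alpha$ is the space of maps $\beta\colon q\to q'$ together with a path $\alpha'\beta\simeq\alpha$ in the contractible space $\mathcal{O}(q,p)$. That fiber is therefore equivalent to $\mathcal{O}(q,q')$ itself. Now $\mathcal{O}(q,q')$ is a fiber of $\mathcal{O}(q,q')\times\ldots$; more cleanly, apply the same reasoning to $q'$ in place of $q$. We have $\mathcal{O}(q,q')\simeq\slice{\mathcal{O}}{p}((q,\alpha),(q',\alpha'))$, since the projection to $\mathcal{O}(q,p)$ has contractible target. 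So $\Sigma_p$ is fully faithful.

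For (2)$\Rightarrow$(1): take $q'=p$ and $\alpha'=1_p$. Then $(p,1_p)$ is terminal in $\slice{\mathcal{O}}{p}$. Full faithfulness gives $\mathcal{O}(q,p)\simeq\slice{\mathcal{O}}{p}((q,\alpha),(p,1_p))\simeq\ast$ whenever some $\alpha\colon q\to p$ exists. Hence $\mathcal{O}(q,p)$ is empty or contractible.

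For the final assertion I would identify $\slice{\C}{\U}$ with a full subcategory of $\slice{\mathcal{O}}{p}$, and then compose with $\Sigma_p$. An object $f\colon T\to\U$ determines the pullback square $f^\ast p\to p$, which is a morphism in $\mathcal{O}$. Conversely, a morphism $q\to p$ in $\mathcal{O}$ is a Cartesian square whose codomain component $T\to\U$ determines it up to contractible choice, by the universal property of pullbacks. Thus the functor $\mathrm{cod}\colon\slice{\mathcal{O}}{p}\to\slice{\C}{\U}$ is an equivalence. The point to check is that morphisms between Cartesian squares over $p$ are themselves Cartesian squares, hence are determined by their codomain parts; this holds by pasting of pullbacks. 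Choosing an inverse $f\mapsto f^\ast p$ and composing with the fully faithful $\Sigma_p$ gives the fully faithful functor $\slice{\C}{\U}\to\mathcal{O}$. Notably, this needs no Yoneda, which is the point emphasized in the paper.

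The main obstacle is making the equivalence $\slice{\mathcal{O}}{p}\simeq\slice{\C}{\U}$ rigorous. I would handle it by noting that $\mathcal{O}\to\C$, given by $\mathrm{cod}$, is a right fibration by \cref{examples: fibrations}. The slice of a right fibration over an object then identifies with the slice of the base over its image, since the induced map $\slice{\mathcal{O}}{p}\to\slice{\C}{\U}$ is a right fibration with contractible fibers, i.e.\ a trivial fibration (cf.\ \cite[2.1.2]{LurieHTT}).
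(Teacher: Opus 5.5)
Your proof is correct and follows the paper's route. For (1)$\Leftrightarrow$(2) you run the mapping-space argument behind Proposition~\ref{prop: characterization monomorphisms 2} directly, which is the right call since $\Orb_\C^{(\mathrm{all})}$ need not have a terminal object. For the final claim you identify $\slice{\Orb_\C^{(\mathrm{all})}}{p}\simeq\slice{\C}{\U}$ via the right fibration $\mathrm{cod}$ and compose with $\Sigma_p$; the paper gets this identification from the terminal object $1_p$ and \cref{prop: representable right fibrations and terminal objects}, you from the trivial fibration of slices, which is an inessential difference.

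One misstatement to fix: full faithfulness of $\Sigma_p$ means the inclusion of the fiber over $\alpha$ into $\Orb_\C^{(\mathrm{all})}(q,q')$ is an equivalence, not that this fiber is contractible (that would force $\alpha'$ to be an equivalence). Both of your implications actually use the correct condition, so nothing downstream is affected.
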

  \begin{proof}
      The equivalence of the two points is Proposition \ref{prop: characterization monomorphisms 2}. Now consider the following composition of right fibrations:
      \[\slice{\Orb_\C^{(\mathrm{all})}}{p}\xrightarrow{\Sigma_p}\Orb_\C^{(\mathrm{all})}\xrightarrow{\mathrm{cod}}\C.\]
      Because right fibrations are stable under composition, the composition is a right fibration, and since the domain has a terminal object, namely $1_p$ with image $\U$ in $\C$, there is an equivalence \[\slice{\C}{\U}\simeq \slice{\Orb_\C^{(\mathrm{all})}}{p}.\]
      It follows that $\Sigma_p$ becomes a map $\slice{\C}{\U}\to\Orb_\C^{(\mathrm{all})}$. This map is fully faithful by the first part of the proof.
  \end{proof}
  \begin{definition}
      We say that $p\colon\U_\ast\to\U$ \textit{classifies} a map $f\colon Y\to X$ if $f$ is in the essential image of $\Sigma_p\colon \slice{\Orb_\C^{(\mathrm{all})}}{p}\to \Orb_\C^{(\mathrm{all})}$.
  \end{definition}
  
  The fiber of the map $\slice{\C}{\U}\to\Orb_\C^{(\mathrm{all})}$ of right fibrations over $\C$ at an object $X\in\C$ yields a fully faithful functor between $\infty$-groupoids
  \[\C(X,\U)\to \slice{\C}{X}^\simeq\]
  given by pulling back along $p$. Intuitively, this means that families $\corner{E}\colon X\to\U$ correspond to certain objects $E\to X$, and paths $\corner{E}\simeq \corner{E'}$ in $\U$ correspond exactly to equivalences $E \simeq E'$ over $X$, and so on. This is an internal straightening-unstraightening equivalence on its image. Because our discussion of the Yoneda lemma will follow the fibrational analogy, both the horizontal incarnation of a family of objects, as a map into the universe, and its vertical counterpart, as a fibered object over the base, will be used, so we begin by giving them explicit terminology. 
\begin{definition}\label{terminology: internal fibrations and presheaves}
    Let $f\colon Y\to X$ be a map classified by a universe $\U$. We call the classifying map $X\to\U$ an \textit{internal presheaf}, while $f$ is an \textit{internal fibration}.
\end{definition}
\begin{prop}[{\cite[Lemma~2.12]{Rasekh_UnivalenceHigherCategoryTheory}}]\label{prop: a universe is an object of equivalences}
    Let $p\colon\U_\ast\to\U$ be a univalent map in a finitely complete locally Cartesian closed $\infty$-category $\C$. Then there is an equivalence over $\U\times\U$:
    \begin{equation*}
        \begin{tikzcd}[ampersand replacement=\&]
        \U \&\& {\Eq_{/\U\times\U}(\U_\ast\times\U,\U\times\U_\ast)} \\
        \& {\U\times\U}
        \arrow[from=1-1, to=1-3, "\simeq"]
        \arrow[from=1-1, to=2-2, "\Delta"]
        \arrow[from=1-3, to=2-2]
    \end{tikzcd}
       \end{equation*}
\end{prop}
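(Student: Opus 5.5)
The strategy is to compare the functors of points of both sides over $\U\times\U$ and then to apply \cref{detection: objects} in the slice $\slice{\C}{\U\times\U}$. Write $E\coloneq\Eq_{/\U\times\U}(\U_\ast\times\U,\U\times\U_\ast)$. The goal is an equivalence of right fibrations over $\slice{\C}{\U\times\U}$ between $\slice{(\slice{\C}{\U\times\U})}{\Delta}\simeq\slice{\C}{\U}$ (using \prelref{item: Slice of slice}) and $\slice{(\slice{\C}{\U\times\U})}{E}$. By the same slice-of-slice identification, both sides may be described over $\C$ as right fibrations whose fiber over $T$ is a space of pairs of maps $(a,b)\colon T\to\U\times\U$ together with some extra data.

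\textbf{Fibers of the $\Eq$ side.} By the defining property of $\Eq$ in the slice over $\U\times\U$, together with the pullback stability \eqref{eq: object of equivalences is stable under pullback}, a lift of $(a,b)$ to $E$ is the same as an equivalence $(a,b)^\ast(\U_\ast\times\U)\simeq(a,b)^\ast(\U\times\U_\ast)$ over $T$. Computing these pullbacks with \prelref{item: pullback of a product with X} gives $a^\ast\U_\ast$ and $b^\ast\U_\ast$. So the fiber over $(a,b)$ is the space of equivalences $a^\ast\U_\ast\simeq b^\ast\U_\ast$ in $\slice{\C}{T}$.

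\textbf{Fibers of the diagonal side.} A lift of $(a,b)$ through $\Delta$ is the same as a path $a\simeq b$ in $\C(T,\U)$. By \cref{prop: characterization of univalent map}, the map $\C(T,\U)\to\slice{\C}{T}^\simeq$, $a\mapsto a^\ast\U_\ast$, is a fully faithful functor of $\infty$-groupoids. Hence it induces an equivalence $\C(T,\U)(a,b)\simeq\slice{\C}{T}^\simeq(a^\ast\U_\ast,b^\ast\U_\ast)$. This identifies the fibers on both sides.

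\textbf{Obstacle: coherence.} These pointwise identifications must assemble into a map of right fibrations over $\slice{\C}{\U\times\U}$, natural in $T$, and this has to be done without invoking Yoneda. I would construct the map globally. Pull back the fully faithful functor $\slice{\C}{\U}\to\Orb_\C^{(\mathrm{all})}$ along $(\U\times\U)$-indexed data, or more simply use the classifying property once: over $\Delta$, the two pullbacks $\Delta^\ast(\U_\ast\times\U)$ and $\Delta^\ast(\U\times\U_\ast)$ are both canonically $\U_\ast$, so the identity gives a point of $\Delta^\ast E$, that is, a map $\U\to E$ over $\U\times\U$. Precomposing with this map induces, for every $T$, exactly the comparison map from paths to equivalences described above, because both are obtained by transporting the identity equivalence. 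It is therefore an equivalence on all fibers. Since it is a map of right fibrations that is fiberwise an equivalence, it is an equivalence of right fibrations. \cref{detection: maps}, applied in $\slice{\C}{\U\times\U}$, then shows that $\U\to E$ is an equivalence over $\U\times\U$. The delicate point is checking that the map induced on fibers really agrees with the one from \cref{prop: characterization of univalent map}. I would handle it by naturality in $T$: both maps send the identity path at $a$ to the identity equivalence, and since a path $a\simeq b$ is freely generated by transport from the identity (path induction in the $\infty$-groupoid $\C(T,\U)$), the two maps agree.
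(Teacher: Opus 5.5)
Your argument is correct, but it is organized differently from the paper's.

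\textbf{The paper's route.} The paper never writes down a map $\U\to E$. It rewrites the full faithfulness of $\slice{\C}{\U}\to\Orb_\C^{(\mathrm{all})}$ (\cref{prop: characterization of univalent map}) as the statement that the diagonal $\slice{\C}{\U}\to\slice{\C}{\U}\times_{\Orb_\C^{(\mathrm{all})}}\slice{\C}{\U}$ is an equivalence of right fibrations over $\slice{\C}{\U\times\U}$. It then cites Rasekh's Lemma~2.12 to identify the target with the right fibration represented by $E$, and concludes by essential injectivity (Proposition~\ref{detection: objects}). Coherence is free there, because the diagonal is a functor. The costs are that the global identification of the fiber product is outsourced, and that only an abstract equivalence over $\U\times\U$ is produced.

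\textbf{Your route.} You build the canonical transport map $u\colon\U\to E$ and use conservativity (Proposition~\ref{detection: maps}). Since $u$ is a single morphism, you only need each map $\slice{\C}{\U\times\U}(A,u)$ to be an equivalence separately. You also obtain the sharper statement that the canonical map is the equivalence.

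\textbf{The input you use implicitly.} Fix $T$ and set $S=\C(T,\U)$ and $G=\slice{\C}{T}^\simeq$, with both maps $S\to G$ given by $a\mapsto a^\ast\U_\ast$. Your fiber computation quietly uses that $\C(T,E)\simeq S\times_G S$ as spaces over $S\times S$. Equivalently, transport in the $\Eq$-family along a path $(\alpha,\beta)$ is conjugation by the induced equivalences. This is exactly the pointwise-in-$T$ content of Rasekh's identification. It is also what makes your path-induction step meaningful. That step is legitimate here: based path induction in a space is just contractibility of based path spaces, so no Yoneda lemma is being smuggled in.

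\textbf{A shortcut for your ``delicate point''.} Given that identification, you can skip the delicate point altogether by applying the paper's own \cref{lemma: retract rigidity} in spaces. The diagonal $S\to S\times_G S$ is an equivalence over $S\times S$, because $S\to G$ is a monomorphism. Hence $S\simeq\C(T,E)$ over $S\times S$. Since $\Delta_S$ has the retraction $\pi_1$, every map $S\to\C(T,E)$ over $S\times S$ is then an equivalence, in particular $\C(T,u)$. You therefore never need to check that $\C(T,u)$ coincides with the comparison map.
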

\begin{proof}
 Let $\slice{\C}{\U}\to\Orb_\C^{(\mathrm{all})}$ be the fully faithful functor constructed in \cref{prop: characterization of univalent map}. Equivalently, the diagonal $\slice{\C}{\U}\to\slice{\C}{\U}\times_{\Orb_\C^{(\mathrm{all})}}\slice{\C}{\U}$ is an equivalence. In the slice over $\U\times\U$, this is an equivalence of right fibrations, where the second is identified in \cite[Lemma~2.12]{Rasekh_UnivalenceHigherCategoryTheory} with the one represented by the object of equivalences $\Eq_{/\U\times\U}(\U_\ast\times\U,\U\times\U_\ast)$, whence the claim.
\end{proof}
This intuitive property makes internal univalence essentially a tautology. It stipulates that paths in a universe $\U$ between objects $P,Q\in\C$ correspond to equivalences $P\simeq Q$. 

\begin{prop}[Univalence]\label{Univalence Axiom}
    Let $\C$ be a finitely complete and locally Cartesian closed $\infty$-category. Let $P,Q$ be internal fibrations over $T$, classified by internal presheaves $\corner{P}, \corner{Q} \colon T \to \U$ into a univalent universe $\U$. Then there is a Cartesian square in $\C$:
\[\begin{tikzcd}
    {\Eq_{/T}(P,Q)} \arrow[r] \arrow[d] \arrow[dr, phantom, "\lrcorner"{anchor=center, pos=0.125}] & \U \arrow[d, "\Delta_{\U}"] \\
    T \arrow[r, "{(\corner{P},\corner{Q})}"'] & {\U \times \U}
\end{tikzcd}.\]
   In other words, there is an equivalence $\U(\corner{P},\corner{Q})\simeq\Eq_{/T}(P,Q)$.
\end{prop}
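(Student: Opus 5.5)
The square will be obtained as the pullback along $(\corner{P},\corner{Q})\colon T\to\U\times\U$ of the equivalence of \cref{prop: a universe is an object of equivalences}, followed by the stability of objects of equivalences under pullback \eqref{eq: object of equivalences is stable under pullback}. Concretely, \cref{prop: a universe is an object of equivalences} identifies $\Delta_\U\colon\U\to\U\times\U$, as an object of $\slice{\C}{\U\times\U}$, with $\Eq_{/\U\times\U}(\U_\ast\times\U,\U\times\U_\ast)$. Applying the pullback functor $(\corner{P},\corner{Q})^\ast$ to both sides gives an equivalence in $\slice{\C}{T}$:
\[(\corner{P},\corner{Q})^\ast\U\simeq(\corner{P},\corner{Q})^\ast\Eq_{/\U\times\U}(\U_\ast\times\U,\U\times\U_\ast).\]
By definition of the path object \eqref{definition: path objects}, the left-hand side is $\U(\corner{P},\corner{Q})$, the fiber of $\Delta_\U$ over $(\corner{P},\corner{Q})$.

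For the right-hand side, I would use \eqref{eq: object of equivalences is stable under pullback} to rewrite it as $\Eq_{/T}\bigl((\corner{P},\corner{Q})^\ast(\U_\ast\times\U),(\corner{P},\corner{Q})^\ast(\U\times\U_\ast)\bigr)$. It then remains to identify the two pulled-back objects. The object $\U_\ast\times\U\to\U\times\U$ is $p\times 1_\U$, which is the pullback of $p$ along the first projection $\mathrm{pr}_1\colon\U\times\U\to\U$. Since pullbacks compose, pulling it back along $(\corner{P},\corner{Q})$ gives the pullback of $p$ along $\mathrm{pr}_1\circ(\corner{P},\corner{Q})=\corner{P}$. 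By the definition of classification (\cref{terminology: internal fibrations and presheaves} and the fiberwise description of $\slice{\C}{\U}\to\Orb_\C^{(\mathrm{all})}$ as pulling back along $p$), this pullback is $P$. Symmetrically, the second object pulls back to $Q$. Hence the right-hand side is $\Eq_{/T}(P,Q)$.

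Combining the two identifications gives $\U(\corner{P},\corner{Q})\simeq\Eq_{/T}(P,Q)$ over $T$. This is exactly the statement that the displayed square, with top map the composite $\Eq_{/T}(P,Q)\simeq\U(\corner{P},\corner{Q})\to\U$, is Cartesian.

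The only point needing care is that these equivalences are equivalences over $T$, compatible with the maps to $\U$, and not merely abstract ones. This holds because every step is the application of a functor $(\corner{P},\corner{Q})^\ast$ to an equivalence in $\slice{\C}{\U\times\U}$, together with the natural pasting of pullback squares. The main obstacle, if any, is checking that the equivalence \eqref{eq: object of equivalences is stable under pullback} is natural enough to be applied here, and that the identification of $(\corner{P},\corner{Q})^\ast(\U_\ast\times\U)$ with $P$ uses the chosen classifying data. Both follow from the pasting lemma for Cartesian squares.
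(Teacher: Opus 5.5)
Your proposal is correct and takes the same approach as the paper: pull back the identification $\U\simeq\Eq_{/\U\times\U}(\U_\ast\times\U,\U\times\U_\ast)$ of \cref{prop: a universe is an object of equivalences} along $(\corner{P},\corner{Q})$, then apply stability of objects of equivalences under pullback \eqref{eq: object of equivalences is stable under pullback}. The paper does this in two lines; you additionally make explicit that $\U_\ast\times\U$ and $\U\times\U_\ast$ pull back to $P$ and $Q$, a step the paper leaves implicit.
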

\begin{proof}
    By \cref{prop: a universe is an object of equivalences}, $\U$ is the object of equivalences over $\U\times \U$. Objects of equivalences are stable under pullback by \eqref{eq: object of equivalences is stable under pullback}, hence we are done.
\end{proof}
\begin{example}\label{example: universe in spaces}
    In $\hat{\Sp}$, the forgetful functor $\Sp^\simeq_\ast\to\Sp^\simeq$ is univalent. By definition a path, i.e. a morphism, is precisely an equivalence, which illustrates \cref{Univalence Axiom}.
\end{example}

\section{The Yoneda embedding}\label{section : Yoneda}
In this section, we formulate and prove an internal version of the Yoneda embedding within a finitely complete locally Cartesian closed $\infty$-category. The strategy parallels the classical one: we first establish an internal Yoneda lemma (\cref{Yoneda Lemma}), and then deduce that the Yoneda map is a monomorphism by applying it to representables. We start in \cref{subsection: the Yoneda map} by defining the Yoneda map. To state the Yoneda lemma in \cref{subsection: yoneda lemma}, we need the notion of internal representable presheaves and fibrations, which we define in \cref{subsection: representable fibrations}. Finally the proof of the Yoneda embedding theorem is in \cref{subsection: proof of Yoneda}.

\subsection{The Yoneda map}\label{subsection: the Yoneda map}
We first explain how to define the Yoneda map internally, by translating the classical construction step by step. The following discussion, which is not used in any proof, constitutes motivation in the $\infty$-category $\Sp$ of homotopy types. Classically, for a locally small $\infty$-category $\X$, it is defined as the adjoint of the mapping space functor $\X(-,-)\colon \X^\op\times\X\to \Sp$. When $\X$ is a small $\infty$-groupoid, three simplifications occur:
\begin{enumerate}
    \item the functor factors through the maximal subgroupoid $\Sp^\simeq\subset\Sp$;
    \item the functor takes values in the $\infty$-category $\Sp_\kappa$ of $\kappa$-small homotopy types, for any sufficiently large regular cardinal $\kappa$;
    \item $\X$ is equivalent to its opposite category $\X^\op$.
\end{enumerate}
It follows that the mapping space functor simplifies to a map $\X(-,-)\colon \X\times\X\to \Sp_\kappa^\simeq$ between small $\infty$-groupoids, that is, a morphism in the $\infty$-category $\Sp$. Via the straightening-unstraightening correspondence (\cref{subsection: fibrations}), this functor is classified by the bifibration $(\mathrm{dom,cod})\colon \Orb_\X\to \X\times\X$. Since $\X$ is a groupoid, $\Orb_\X\simeq \Fun(I,\X)$, where $I$ is the walking isomorphism $1$-category. Since $I$ is contractible, $\Orb_\X\simeq\X$ and we therefore obtain a Cartesian square in $\Sp$:
\[\begin{tikzcd}
    {\X} \arrow[r] \arrow[d, "\Delta"'] \arrow[dr, phantom, "\lrcorner"{anchor=center, pos=0.125}] & {\Sp_{\kappa,\ast}^\simeq} \arrow[d] \\
    {\X \times \X} \arrow[r, "{\X(-,-)}"'] & {\Sp_\kappa^\simeq}
\end{tikzcd}.\]

The conclusion of this discussion is the following dictionary. In the $\infty$-category $\Sp$ of homotopy types, the mapping space functor $\X(-,-)\colon \X\times \X\to\Sp^\simeq_\kappa$ of an $\infty$-groupoid $\X$ is a classifying map for the diagonal $\Delta\colon \X\to \X\times \X$, the fiber of $\Delta$ over $(x,y)$ being the path space $\X(x,y)$. We now pass to the general setting in which every ingredient of this description has an internal counterpart: a finitely complete, locally Cartesian closed $\infty$-category equipped with a universe $\U$ classifying the diagonal $\Delta_X$ of an object $X$. Here $X$ plays the role of $\X$, and $\U$ that of $\Sp^\simeq_\kappa$. There is then a map $X(-,-)\colon X\times X\to\U$, unique up to a contractible space of choices (\cref{def: univalent map}), such that the following square is Cartesian:
\[\begin{tikzcd}
    X \arrow[r] \arrow[d, "\Delta"'] \arrow[dr, phantom, "\lrcorner"{anchor=center, pos=0.125}] & {\U_\ast} \arrow[d] \\
    {X \times X} \arrow[r, "{X(-,-)}"'] & \U
\end{tikzcd}.\]
\begin{definition}\label{definition: yoneda map}
   The \textit{Yoneda map} $\yo_X\colon X\to\U^X$ is defined as the adjoint of $X(-,-)\colon X\times X\to\U$.
\end{definition}
Classically, the Yoneda embedding of an $\infty$-groupoid is fully faithful, and a functor between $\infty$-groupoids is fully faithful precisely when it is a monomorphism of spaces. Indeed, a map of spaces $f\colon \X\to\Y$ is fully faithful when each induced map of path spaces $\X(x_0,x_1)\to\Y(fx_0,fx_1)$ is an equivalence; assembled over all pairs $(x_0,x_1)$, this says that the canonical map $\X\simeq\Orb_\X\to\X\times_\Y\X$ is an equivalence over $\X\times\X$, that is, that $f$ is a monomorphism in $\Sp$. The internal rendering of full faithfulness of the Yoneda map is therefore the following statement, the main result of this paper. This will be proved in \cref{subsection: proof of Yoneda}.
\begin{theorem}[Yoneda Embedding]\label{Yoneda Embedding}
    Let $\C$ be a finitely complete locally Cartesian closed $\infty$-category, $X$ an object, and $\U$ a universe classifying the diagonal $\Delta_X\colon X\to X\times X$. The Yoneda map \[\yo_X\colon X\to\U^X\] is a monomorphism.
\end{theorem}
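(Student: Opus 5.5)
By \cref{prop: characterization monomorphisms 5} (equivalently \cref{prop: characterization monomorphisms 4}), it suffices to show that the square with top $\yo_X$, verticals $\Delta_X$ and $\Delta_{\U^X}$, and bottom $\yo_X\times\yo_X$ is Cartesian. Equivalently, the comparison map
\[c\colon X\to (X\times X)\times_{\U^X\times\U^X}\U^X\]
over $X\times X$ should be an equivalence. The target is the path object $\U^X(\yo_X\pi_1,\yo_X\pi_2)$ in $\slice{\C}{X\times X}$. Working over the generic pair $T=X\times X$ with $a=\pi_1$, $b=\pi_2$, the plan has three steps.

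\textbf{Step 1: compute the path object of $\U^X$.} Paths in $\U^X$ between $\yo_Xa$ and $\yo_Xb$ are the dependent product over $T\times X\to T$ of paths in $\U$ between the adjoint maps $X(a,-),X(b,-)\colon T\times X\to\U$. This uses that $(-)^X$ is a right adjoint, so it commutes with the pullback defining path objects, together with the Beck--Chevalley condition. By \cref{Univalence Axiom}, the latter object is $\Eq_{/T\times X}(P_a,P_b)$, where $P_a\to T\times X$ is the pullback of $\Delta_X$ along $a\times 1_X$, i.e.\ the internal representable fibration $\slice{X}{a}$. So
\[\U^X(\yo_Xa,\yo_Xb)\simeq \textstyle\prod_{T\times X\to T}\Eq_{/T\times X}(P_a,P_b).\]

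\textbf{Step 2: internal Yoneda lemma.} The object of all maps $\prod_{T\times X\to T}\Map_{/T\times X}(P_a,P_b)$ should be equivalent to the fiber $b^\ast$-pulled path object $X(a,b)$ over $T$, via evaluation at the canonical point (reflexivity) $T\to P_a$. I would prove this fibrationally. The representable $P_a\to T\times X$ admits the section $(1,a)$, and maps out of $P_a$ over $T\times X$ should be determined by restriction along this section; this is a path-induction statement. Path induction is available here because $P_a$, as a slice of a diagonal, is the "contractible total space" $T\times_X X\simeq T$ sitting over $T$. Restricting to equivalences, one obtains a map $X(a,b)\to\prod\Eq(P_a,P_b)$ that is an equivalence. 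This uses the fact that a map between representables that corresponds to a path is invertible, since its inverse path gives an inverse; this is the internal form of \cref{detection: objects}.

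\textbf{Step 3: identify $c$ with the Step~2 equivalence.} This is where I expect the main obstacle: the equivalence built in Steps 1--2 is abstract, and one needs the map $c$ actually induced by $\yo_X$ to be an equivalence. The plan is to apply \cref{lemma: retract rigidity} in $\slice{\C}{X\times X}$. Take $A=X\xrightarrow{\Delta}X\times X$, where $i=\Delta$ has retraction $r=\pi_1$, and $B$ the path object of $\U^X$. The chain of Steps 1--2 gives some equivalence $\varphi\colon A\simeq B$ over $X\times X$, since $X(a,b)$ over $T=X\times X$ is exactly $\Delta_X$. Then \cref{lemma: retract rigidity} forces every map $A\to B$ over $X\times X$, in particular $c$, to be an equivalence. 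Hence the square is Cartesian and $\yo_X$ is a monomorphism.

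The care needed lies in Step~2: establishing the internal Yoneda lemma using only finite limits, dependent products and univalence, without external Yoneda. Conservativity on representables, \cref{detection: maps}, substitutes where one would otherwise compare functors of points.
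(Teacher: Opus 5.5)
Your proposal is correct and follows essentially the same route as the paper. You reduce to the Cartesian square, identify $\U^X(\yo_X a,\yo_X b)$ with $\prod_{T\times X\to T}\Eq_{/T\times X}(P_a,P_b)$ via univalence and Beck--Chevalley, and identify that with $X(a,b)$ via the internal Yoneda lemma together with invertibility of maps between representables. You then close the gap between this abstract equivalence and the actual comparison map $c$ using \cref{lemma: retract rigidity} for $\Delta_X$ with retraction $\pi_1$, exactly as the paper does.

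The differences are minor. You apply univalence directly in $\C$ over $T\times X$, which is slightly simpler than the paper's passage through $\slice{\C}{X}$. Your Step~2 justification via inverse paths would require showing that the Yoneda correspondence respects composition. The paper sidesteps this by base-changing to the generic path object $X(a,b)$: there both representables become the terminal object over the base, so the generic map is automatically invertible. It then concludes with \cref{prop: an equivalence that factors with a mono is an equivalence}.
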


\subsection{Representable fibrations}\label{subsection: representable fibrations}
To run the classical strategy of the proof, we will need the notion of representable fibration in a parametrized form, over an auxiliary object $T$. The need for the parameter is explained in \cref{subsection: yoneda lemma}. By analogy with the external slice construction (Example~\ref{examples: fibrations 2}), we define:
\begin{definition}\label{definition: internal representable presheaf and fibers}
    Let $\C$ be a finitely complete $\infty$-category, $X$ an object, and $\U$ a universe classifying the diagonal $\Delta_X$. Let $x\colon T\to X$ be a generalized element, and $P\to X\times T$ a map.
    \begin{enumerate}
        \item The \textit{internal representable presheaf} $X(-,x)\colon X\times T\to\U$, together with its \textit{internal representable fibration} $\slice{X}{x}\to X\times T$, is defined by the following composition:
\[\begin{tikzcd}[column sep=huge]
    {\slice{X}{x}} \arrow[r] \arrow[d] \arrow[dr, phantom, "\lrcorner"{anchor=center, pos=0.125}] & X \arrow[r] \arrow[d, "\Delta"'] \arrow[dr, phantom, "\lrcorner"{anchor=center, pos=0.125}] & {\U_\ast} \arrow[d, "p"] \\
    {X \times T} \arrow[r, "{X \times x}"] \arrow[rr, "{X(-,x)}"', bend right=18] & {X \times X} \arrow[r, "{X(-,-)}"] & \U
\end{tikzcd}.\]
\item The \textit{fiber} $P_{x}\to T$ of $P$ at $x$ is the following pullback:
\[\begin{tikzcd}[column sep=large]
    {P_{x}} \arrow[r] \arrow[d] \arrow[dr, phantom, "\lrcorner"{anchor=center, pos=0.125}] & P \arrow[d] \\
    T \arrow[r, "{(x,1_T)}"'] & {X \times T}
\end{tikzcd}.\]
\end{enumerate}
\end{definition}
\begin{remark}\label{Notation: Path space between two point in X}
    Given two generalized points $x_0,x_1\colon T\to X$, the fiber of the internal representable fibration $\slice{X}{x_1}$ at $x_0$ is the path object $X(x_0,x_1)\in\slice{\C}{T}$:
\[\begin{tikzcd}
	X(x_0,x_1) \arrow[r] \arrow[d] \arrow[dr, phantom, "\lrcorner", very near start] & \slice{X}{x_1}\arrow[dr, phantom, "\lrcorner", very near start] \arrow[r] \arrow[d] & X \arrow[d] \\
	T \arrow[r, "{(x_0,1_T)}"'] & X \times T \arrow[r, "{1_X \times x_1}"'] & X \times X
\end{tikzcd}.\]
\end{remark}
\begin{remark}\label{remark: Fibered representable is T}
     Using \prelref{item: pullback of a product with X}, we observe that the fibered representable $\slice{X}{x}$ is, as an object over $X\times T$, equivalent to $T$ equipped with the structure map $(x,1_T)\colon T\to X\times T$. This is the internal incarnation of the contractibility of the slice over a final object: if $x\in X$ is a point of a homotopy type $X$, the object $\slice{X}{x}$ is the space of paths in $X$ with one endpoint fixed at $x$, which is contractible by retracting each path to its endpoint.
\end{remark}
\subsection{The internal Yoneda lemma}\label{subsection: yoneda lemma}
Before stating the internal Yoneda lemma, let us explain what its correct formulation should be; the naive translation of the classical statement fails in an instructive way. Classically, for a presheaf $P$ on an $\infty$-groupoid $\X$ and a point $x_0\in \X$, the Yoneda lemma provides an equivalence $\mathrm{Nat}\big(\X(-,x_0),P\big)\simeq P(x_0)$. Translating naively in a locally Cartesian closed $\infty$-category $\C$, for an internal presheaf $\corner{P}\colon X\to\U$ with fiber $P_{x_0}$ at a global point $x_0\colon \ast\to X$, one might guess
\[\U^X\big(\corner{X(-,x_0)},\corner{P}\big)\overset{?}{\simeq} P_{x_0}.\]

This cannot be right: the left hand side is a path object in $\U^X$ \eqref{definition: path objects}, and paths are invertible, so it is symmetric in its two arguments, i.e. it is also equivalent to $\U^X\big(\corner{P},\corner{X(-,x_0)}\big)$, while the right hand side is not. The asymmetry is explained by univalence: paths in the universe correspond to \emph{equivalences} of families, not to arbitrary fibrewise maps. This is made precise by \cref{Lemma: path in fibered universe are equivalences over the base} below, which computes paths in the presheaf object $\U^X$: $\U^X(\corner{P},\corner{Q})\simeq \prod_X\Eq_{/X}(P,Q)$. The object of \emph{all} internal natural transformations is therefore not visible as a path object in $\U^X$; to capture it we must move out of the universe and form the internal object of fibrewise maps directly, using the corresponding internal fibrations. This leads to the correct formulation:
\[\prod_X\Map_{/X}(\slice{X}{x_0},P)\simeq P_{x_0},\]
where $\prod_X\colon\slice{\C}{X}\to\C$ is the dependent product. We notice that this equivalence no longer depends on universes, and its proof (\cref{Yoneda Lemma}) does not either: it holds in any locally Cartesian closed $\infty$-category with pullbacks. $P$ is accordingly allowed to be an arbitrary map in the general statement.

\medskip

A second adjustment is needed: global points do not suffice. While a homotopy type $X$ is recovered from its space of global points, $\Sp(\ast,X)\simeq X$, an object of an $\infty$-category $\C$ is in general not determined by $\C(\ast,X)$; it is instead characterized by the homotopy types of generalized points $\C(T,X)$, for all objects $T\in\C$. To reflect this internal perspective, and because the proof of \cref{Yoneda Embedding} will require it when we apply the lemma to the universal pair of points $\pi_0,\pi_1\colon X\times X\to X$, we formulate the Yoneda lemma in its parametrized form, over an arbitrary base object $T$.

\begin{lemma}[Internal Yoneda Lemma]\label{Yoneda Lemma}
Let $\C$ be a locally Cartesian closed $\infty$-category with pullbacks. Let $X$ be an object of $\C$, $x_0\colon T\to X$ a generalized element, and $P\to X\times T$ any map. There is an equivalence in $\slice{\C}{T}$: 
\begin{equation*}
    \prod_{X\times T\to T}\Map_{/X\times T}(\slice{X}{x_0}, P)\simeq P_{x_0}.
\end{equation*}
\end{lemma}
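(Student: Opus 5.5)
The plan is to reduce everything to adjunctions, using \cref{remark: Fibered representable is T}, which identifies $\slice{X}{x_0}$, as an object of $\slice{\C}{X\times T}$, with $T$ equipped with the structure map $s\coloneq(x_0,1_T)\colon T\to X\times T$. Write $q\colon X\times T\to T$ for the projection, so that $q\circ s\simeq 1_T$. Then $\slice{X}{x_0}\simeq\Sigma_s(T\xrightarrow{1}T)$, and the left-hand side of the lemma becomes
\[\prod_q\Map_{/X\times T}\bigl(\Sigma_s 1_T,P\bigr).\]

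We identify both sides by the maps into them from an arbitrary object $A\to T$ of $\slice{\C}{T}$, and conclude with \cref{prop: equivalence is detected on representables} in $\slice{\C}{T}$. Since everything is natural in $A$, this gives an equivalence of represented right fibrations over $\slice{\C}{T}$, hence of representing objects by \cref{detection: objects}. On the left-hand side, the adjunction $q^\ast\dashv\prod_q$ and the exponential adjunction in $\slice{\C}{X\times T}$ give
\[\slice{\C}{T}\Bigl(A,\prod_q\Map(\Sigma_s1_T,P)\Bigr)\simeq\slice{\C}{X\times T}\bigl(q^\ast A\times\Sigma_s 1_T,P\bigr).\]
The product $q^\ast A\times\Sigma_s1_T$ in $\slice{\C}{X\times T}$ is $\Sigma_s s^\ast q^\ast A$; this is Frobenius reciprocity, i.e.\ $\Sigma_s$ of the pullback of $q^\ast A$ along $s$. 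Since $s^\ast q^\ast A\simeq(qs)^\ast A\simeq A$, the product is $\Sigma_s A$. Applying the adjunction $\Sigma_s\dashv s^\ast$ then gives
\[\slice{\C}{X\times T}(\Sigma_sA,P)\simeq\slice{\C}{T}(A,s^\ast P),\]
and $s^\ast P$ is precisely $P_{x_0}$ by definition of the fiber.

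All the equivalences used are natural in $A$, so the composite is natural and the detection argument applies. Equivalently, one can phrase the argument purely at the level of functors: $\prod_q\Map(\Sigma_s1,-)$ is right adjoint to $\Sigma_s1\times q^\ast(-)\simeq\Sigma_s s^\ast q^\ast\simeq\Sigma_s$, whose right adjoint is $s^\ast$. Uniqueness of adjoints then gives $\prod_q\Map(\Sigma_s1,-)\simeq s^\ast$, and evaluating at $P$ yields the claim without any pointwise argument.

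The main obstacle is the identification $q^\ast A\times_{X\times T}\Sigma_s1_T\simeq\Sigma_s A$. This is a pullback-pasting computation: the pullback of $q^\ast A=X\times A\to X\times T$ along $s$ is $A$, using the Cartesian squares of \prelref{item: pullback of a product with X}. The one point needing care is that the two structure maps to $X\times T$ agree, namely $A\to T\xrightarrow{s}X\times T$. The identification must also be natural in $A$, which holds because it is the canonical Frobenius map $\Sigma_s(s^\ast B)\to B\times\Sigma_s1$. We check that this canonical map is an equivalence by this pasting argument, valid in any $\infty$-category with pullbacks.
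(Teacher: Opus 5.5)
Your proposal is correct and follows essentially the same route as the paper: both detect the equivalence on maps out of an arbitrary $A\in\slice{\C}{T}$, using $q^\ast\dashv\prod_q$, then the exponential adjunction in $\slice{\C}{X\times T}$, then the identification of $q^\ast A\times_{X\times T}\slice{X}{x_0}$ with $A$ lying over $X\times T$ via $s=(x_0,1_T)$, and finally $\Sigma_s\dashv s^\ast$; for the third step you invoke Frobenius reciprocity, while the paper computes the pullback directly using \cref{remark: Fibered representable is T}. Your closing remark, that $\prod_q\Map_{/X\times T}(\slice{X}{x_0},-)$ is right adjoint to a functor equivalent to $\Sigma_s$ and is therefore equivalent to $s^\ast$ by uniqueness of adjoints, is a tidy functorial repackaging of the same computation.
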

\begin{proof}
    Throughout, $\pi\colon X\times T\to T$ denotes the projection. By Proposition \ref{detection: objects}, it is enough to show that 
    \begin{equation*}
        \slice{\C}{T}(A,\prod_{X\times T\to T}\Map_{/X\times T}(\slice{X}{x_0}, P) )\simeq \slice{\C}{T}(A,P_{x_0})
    \end{equation*}
    naturally in $A\in\slice{\C}{T}$. This is achieved by the following chain of natural equivalences:
    \begin{align*}
    \slice{\C}{T}\Big(A,\prod_{X\times T\to T}&\Map_{/X\times T}(\slice{X}{x_0}, P)\Big)\\
        &\simeq \slice{\C}{X\times T}\Big(X\times A,\Map_{/X\times T}(\slice{X}{x_0}, P)\Big)&&\proofstep{$\pi^\ast\dashv\Pi_{\pi}$}\\
        &\simeq \slice{\C}{X\times T}\Big((X\times A)\times_{(X\times T)}\slice{X}{x_0},P\Big)&&\proofstep{$-\times_{X\times T}\slice{X}{x_0}\dashv\Map_{/X\times T}(\slice{X}{x_0},-)$}\\
        &\simeq \slice{\C}{X\times T}\Big( A\times_{ T}\slice{X}{x_0},P\Big) &&\proofstep{by \prelref{item: pullback of a product with X}}\\
        &\simeq\slice{\C}{X\times T}\big( A,P\big) &&\proofstep{$\slice{X}{x_0}\simeq T$ over $X\times T$, by \cref{remark: Fibered representable is T}}\\
        &\simeq \slice{\C}{T}\big(A,P\times_{(X\times T)}T\big) &&\proofstep{\prelref{item: Hom C vs slice} \& \prelref{item: Slice of slice} with $T\xrightarrow{(x_0,1_T)} X\times T$}  \\
        &\simeq  \slice{\C}{T}\big(A,P_{x_0}\big) &&\proofstep{by \cref{definition: internal representable presheaf and fibers}}
    \end{align*}
\end{proof}

\subsection{Proof of the Yoneda embedding theorem}\label{subsection: proof of Yoneda}
We now apply the internal Yoneda lemma to internal representable fibrations. Two further lemmas are needed. The first asserts that all internal natural transformations between representables are invertible, as expected from the classical case, where they correspond to paths in $X$. The second asserts that path objects in $\U^X$ compute internal objects of fibrewise equivalences, giving an exponentiated form of univalence.

\begin{lemma}\label{lemma: nat trans between representables are nat equivalences}
    Let $\C$ be a finitely complete locally Cartesian closed $\infty$-category, and let $x_0,x_1\colon T\to X$ be generalized elements. There is an equivalence in $\slice{\C}{T}$: \[\prod_{X\times T\to T}\Eq_{/X\times T}(\slice{X}{x_0},\slice{X}{x_1})\simeq \prod_{X\times T\to T}\Map_{/X\times T}(\slice{X}{x_0},\slice{X}{x_1}).\]
\end{lemma}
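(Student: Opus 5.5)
The plan is to compare the two sides on generalized elements and conclude with \cref{prop: equivalence is detected on representables}. The inclusion $\Eq_{/X\times T}(\slice{X}{x_0},\slice{X}{x_1})\subseteq\Map_{/X\times T}(\slice{X}{x_0},\slice{X}{x_1})$ is a monomorphism in $\slice{\C}{X\times T}$. The dependent product $\prod_\pi$, for $\pi\colon X\times T\to T$, is a right adjoint and so preserves monomorphisms. It therefore induces a monomorphism between the two sides of the statement, which I take as the comparison map.

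By Proposition \ref{detection: maps} it is enough to show that, for every $A\in\slice{\C}{T}$, the induced map
\[\slice{\C}{T}\Big(A,\prod_\pi\Eq_{/X\times T}(\slice{X}{x_0},\slice{X}{x_1})\Big)\to\slice{\C}{T}\Big(A,\prod_\pi\Map_{/X\times T}(\slice{X}{x_0},\slice{X}{x_1})\Big)\]
is an equivalence. Since it is the inclusion of a union of path components, it suffices to show that it is surjective on $\pi_0$.

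To make both sides explicit, I would run the first steps of the chain in the proof of \cref{Yoneda Lemma}. Using $\pi^\ast\dashv\prod_\pi$, the right-hand side becomes maps over $X\times T$ from $X\times A$ into the internal hom. This is the same as the space of maps
\[\pi_A^\ast\slice{X}{x_0}\to\pi_A^\ast\slice{X}{x_1}\]
in $\slice{\C}{X\times A}$, where $\pi_A\colon X\times A\to X\times T$ is the base change. By the defining subfunctor of $\Eq$ and its stability under pullback \eqref{eq: object of equivalences is stable under pullback}, the left-hand side is the subspace of those maps which are equivalences in $\slice{\C}{X\times A}$. The claim thus reduces to showing that every morphism $\pi_A^\ast\slice{X}{x_0}\to\pi_A^\ast\slice{X}{x_1}$ over $S\coloneq X\times A$ is an equivalence.

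For this I use \cref{remark: Fibered representable is T}, pulled back along $\pi_A$ with the help of \prelref{item: pullback of a product with X}. It identifies $\pi_A^\ast\slice{X}{x_k}$ with $A$, equipped with the structure map $i_k=(x_k a,1_A)\colon A\to X\times A$, where $a\colon A\to T$ is the structure map of $A$. Both $i_0$ and $i_1$ admit the common retraction $r=\mathrm{pr}_A\colon X\times A\to A$.

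Now take a morphism $f\colon(A,i_0)\to(A,i_1)$ over $S$, with homotopy $i_1 f\simeq i_0$. Whiskering with $r$ gives
\[f\simeq (r i_1)f\simeq r i_0\simeq 1_A,\]
so $f$ is an equivalence. This is exactly the argument of \cref{lemma: retract rigidity}, and it does not even need a preexisting equivalence $\varphi$, since the two objects share the retraction $r$.

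The main obstacle is the bookkeeping in the middle step. One must check that the base-change identifications, the adjunction $\pi^\ast\dashv\prod_\pi$, and the internal-hom adjunction carry the subobject $\Eq$ exactly onto the subspace of fibrewise equivalences over $X\times A$. This needs naturality in $A$ and compatibility of \eqref{eq: object of equivalences is stable under pullback} with these adjunctions. Once that is settled, the rigidity argument is formal.
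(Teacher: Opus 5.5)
Your proof is correct, but it takes a different route from the paper. The paper does not test the inclusion $\prod_\pi\Eq_{/X\times T}(\slice{X}{x_0},\slice{X}{x_1})\hookrightarrow\prod_\pi\Map_{/X\times T}(\slice{X}{x_0},\slice{X}{x_1})$ on all generalized elements. Instead it starts from the equivalence $X(x_0,x_1)\xrightarrow{\simeq}\prod_\pi\Map_{/X\times T}(\slice{X}{x_0},\slice{X}{x_1})$ supplied by the internal Yoneda lemma (\cref{Yoneda Lemma}). It shows that this equivalence factors through the subobject of equivalences, and concludes with \cref{prop: an equivalence that factors with a mono is an equivalence}. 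Checking that factorization uses exactly the transposition bookkeeping you flag as the main obstacle. Factoring through a monomorphism is a property, so it transposes along $\pi^\ast\dashv\prod_\pi$ and the internal-hom adjunction. The defining universal property of $\Eq_{/X\times T}$ then turns it into invertibility of a fibrewise map. So that step is not a real gap. The paper, however, does this only for the single universal element $A=X(x_0,x_1)$. There the fibrewise map is identified as an endomorphism of $X(x_0,x_1)$ over $X(x_0,x_1)$, hence the identity. That is precisely your retraction argument, with $r$ the projection $X\times X(x_0,x_1)\to X(x_0,x_1)$. So the core computation is shared: the paper uses Yoneda to reduce to one element, while you run the computation uniformly in $A$ and close with \cref{prop: equivalence is detected on representables}. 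You could equally take $A=\prod_\pi\Map_{/X\times T}(\slice{X}{x_0},\slice{X}{x_1})$ itself, lift its identity, and invoke \cref{lemma: mono with section is an equivalence}. Your version is more self-contained, since it does not depend on \cref{Yoneda Lemma} at all. It also makes explicit why fibrewise maps between pulled-back representables are invertible: both are sections of a common retraction. This is the internal form of the fact that natural transformations between representables on a groupoid are invertible. The paper's version, in exchange, directly exhibits the Yoneda equivalence as landing in $\prod_\pi\Eq$, which is how the lemma is chained in the proof of \cref{Yoneda Embedding}. Your observation that no auxiliary equivalence $\varphi$ is needed, unlike in \cref{lemma: retract rigidity}, is also correct.
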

\begin{proof}
By \cref{prop: an equivalence that factors with a mono is an equivalence}, it is enough to show that the map defined by the Yoneda lemma
\begin{equation}\label{eq: Yoneda equivalence for representables}
X(x_0,x_1)\xrightarrow{\simeq} \prod_{X\times T\to T}\Map_{/X\times T}(\slice{X}{x_0},\slice{X}{x_1})
\end{equation}
over $T$ factors through the monomorphism \[\prod_{X\times T\to T}\Eq_{/X\times T}(\slice{X}{x_0},\slice{X}{x_1})\hookrightarrow \prod_{X\times T\to T}\Map_{/X\times T}(\slice{X}{x_0},\slice{X}{x_1}).\]
This is indeed a monomorphism since $\prod_{X\times T\to T}$ preserves limits, hence monomorphisms.
This happens if and only if the adjoint of \eqref{eq: Yoneda equivalence for representables}
\begin{equation}\label{eq: Adjoint of Yoneda eq for representables}
    X\times X(x_0,x_1)\to \Map_{/X\times T}(\slice{X}{x_0},\slice{X}{x_1})\qquad \text{ over }X\times T
\end{equation}
factors through $\Eq_{/X\times T}(\slice{X}{x_0},\slice{X}{x_1})$. By definition of the object of equivalences (\cref{subsection: equivalences}), this happens precisely if the adjoint of \eqref{eq: Adjoint of Yoneda eq for representables} 
\begin{equation*}
    \big(X\times X(x_0,x_1)\big)\times _{(X\times T)} \slice{X}{x_0}\xrightarrow{}\big(X\times X(x_0,x_1)\big)\times_{(X\times T)}\slice{X}{x_1}
\end{equation*}
is an equivalence over $X\times X(x_0,x_1)$. But using \prelref{item: pullback of a product with X} the latter is equivalent to \begin{equation*}
    X(x_0,x_1)\times _T \slice{X}{x_0}\to X(x_0,x_1)\times _T\slice{X}{x_1},
\end{equation*}
which by \cref{remark: Fibered representable is T} is precisely a map \[X(x_0,x_1)\to X(x_0,x_1)\]
over $X(x_0,x_1)$. We deduce that this map is the identity, hence an equivalence, as desired.
\end{proof}
\begin{lemma}\label{Lemma: path in fibered universe are equivalences over the base}
    Let $\C$ be a finitely complete locally Cartesian closed $\infty$-category, $X\in\C$ an object, and $\U$ a univalent universe classifying the diagonal $\Delta_X$. Suppose that $\corner{P},\corner{Q}\colon X\times T\to\U$ are two internal presheaves with corresponding internal fibrations $P$ and $Q$ over $X\times T$. Then there is an equivalence in $\slice{\C}{T}$:
     \[\U^X\Big(\corner{P},\corner{Q}\Big)\simeq \prod_{X\times T\to T}\Eq_{/X\times T}(P,Q),\]
     where we keep the notation $\corner{P},\corner{Q}$ for their adjoints $T\to \U^X$.
\end{lemma}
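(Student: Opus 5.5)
The plan is to compute both sides as representable functors on $\slice{\C}{T}$, or more economically, to exhibit the path object $\U^X(\corner{P},\corner{Q})$ as a pullback and then transport it through the dependent product using Beck--Chevalley. By definition \eqref{definition: path objects}, $\U^X(\corner{P},\corner{Q})$ is the pullback of $\Delta_{\U^X}\colon\U^X\to\U^X\times\U^X$ along $(\corner{P},\corner{Q})\colon T\to\U^X\times\U^X$. The first step is to rewrite $\U^X$ and its diagonal in terms of dependent products. We have $\U^X\simeq\prod_{X\to\ast}(X\times\U)$ in the sense that maps $T\to\U^X$ correspond to maps $X\times T\to\U$. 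Since $\prod$ is a right adjoint, it preserves limits, so the diagonal of $\U^X$ is the exponential $(\Delta_\U)^X$, i.e.\ $\prod_{X\to\ast}$ applied to $X\times\Delta_\U$.

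Next, I would pull this back along $T\to\U^X\times\U^X$. Consider the map $\pi\colon X\times T\to T$ and the evaluation/adjunction data. By the adjunction $\pi^\ast\dashv\prod_\pi$, the pullback of the exponentiated diagonal along $(\corner{P},\corner{Q})\colon T\to(\U\times\U)^X$ is $\prod_\pi$ of the pullback of $\Delta_\U$ along the adjoint map $(\corner{P},\corner{Q})\colon X\times T\to\U\times\U$. This is where Beck--Chevalley enters: the square with $X\times T\to X\times(\U\times\U)^X$ over $T\to(\U\times\U)^X$ is Cartesian, and the canonical comparison $f^\ast\prod_g\simeq\prod_h k^\ast$ then applies, followed by pulling back along evaluation $X\times(\U\times\U)^X\to\U\times\U$. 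Alternatively, and perhaps more cleanly, I would verify the equivalence by Proposition~\ref{detection: objects}. For $A\in\slice{\C}{T}$, a map $A\to\U^X(\corner{P},\corner{Q})$ over $T$ is a path between the two composites $A\to T\to\U^X$. By adjunction, this is a path between the corresponding maps $X\times A\to\U$, i.e.\ a map $X\times A\to\U(\corner{P},\corner{Q})$ over $X\times T$, where this path object is taken over $X\times T$.

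The key step is then univalence, Proposition~\ref{Univalence Axiom}, applied over the base $X\times T$: it gives $\U(\corner{P},\corner{Q})\simeq\Eq_{/X\times T}(P,Q)$ in $\slice{\C}{X\times T}$. Combining this with $\pi^\ast\dashv\prod_\pi$ (noting $\pi^\ast A\simeq X\times A$), we get natural equivalences
\[\slice{\C}{T}\big(A,\U^X(\corner{P},\corner{Q})\big)\simeq\slice{\C}{X\times T}\big(X\times A,\Eq_{/X\times T}(P,Q)\big)\simeq\slice{\C}{T}\Big(A,\prod_{X\times T\to T}\Eq_{/X\times T}(P,Q)\Big).\]
Proposition~\ref{detection: objects} then concludes.

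The main obstacle is the first equivalence, namely that paths in $\U^X$ over $T$ correspond to maps $X\times A\to\U(\corner{P},\corner{Q})$ over $X\times T$. This needs care with naturality in $A$, since the path object is a pullback of the diagonal. I would handle it by writing $\slice{\C}{T}(A,\U^X(\corner{P},\corner{Q}))$ as the fiber product $\C(A,\U^X)\times_{\C(A,\U^X\times\U^X)}\ast$. The adjunction $X\times-\dashv(-)^X$ commutes with these finite limits of mapping spaces, and this turns the expression into the analogous fiber product for $\C(X\times A,\U)$, which is exactly $\slice{\C}{X\times T}(X\times A,\U(\corner{P},\corner{Q}))$.
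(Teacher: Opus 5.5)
Your proof is correct, but it takes a different route from the paper's in two places.

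\emph{Univalence.} You apply \cref{Univalence Axiom} directly in $\C$ over the base $X\times T$. This gives $\U(\corner{P},\corner{Q})\simeq\Eq_{/X\times T}(P,Q)$ in $\slice{\C}{X\times T}$. The paper instead cites that $p\times X$ is univalent in $\slice{\C}{X}$ and applies univalence there. The two squares agree: $(\U\times X)\times_X(\U\times X)\simeq\U\times\U\times X$, so the paper's square is yours with a factor of $X$ added. Your version shows that the detour through univalence in the slice is not needed.

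\emph{Passing from $X\times T$ to $T$.} The paper applies the pullback-preserving functor $\prod_{X\times T\to T}$ to that Cartesian square, after adding a factor of $T$. It then uses Beck--Chevalley and $\prod_X(\U\times X)\simeq\U^X$ to identify the result with the square defining $\U^X(\corner{P},\corner{Q})$. You work with represented functors on $\slice{\C}{T}$ instead:
\begin{itemize}
\item the exponential adjunction, applied to the fiber product of mapping spaces, turns maps $A\to\U^X(\corner{P},\corner{Q})$ over $T$ into maps $X\times A\to\U(\corner{P},\corner{Q})$ over $X\times T$;
\item $\pi^\ast\dashv\prod_\pi$ then moves back to $\slice{\C}{T}$;
\item Proposition~\ref{detection: objects} concludes.
\end{itemize}

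\emph{Trade-offs.} Your route is more elementary: it needs neither Beck--Chevalley nor univalence in slices. It is the same style of argument the paper uses for \cref{Yoneda Lemma}, and the naturality in $A$ you flag is handled at the same level of rigor. The paper's route needs no representability argument. It produces the equivalence directly as $\prod_{X\times T\to T}$ applied to a Cartesian square, so the comparison map is explicit.

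Your first-paragraph Beck--Chevalley sketch is essentially the paper's approach, stated loosely. The representable computation you actually carry out is the cleaner of your two versions.
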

\begin{proof}
    By \cite[§1, Item (19)]{Rasekh_TheoryElementaryHigherToposes}, the map $p\times X\colon\U_\ast\times X\to \U\times X$ is univalent in the slice over $X$. By univalence (\cref{Univalence Axiom}) applied in $\slice{\C}{X}$, there is a Cartesian square in $\slice{\C}{X}$:
\[\begin{tikzcd}[column sep=large]
    {\Eq_{/X \times T}(P,Q)} \arrow[r] \arrow[d] \arrow[dr, phantom, "\lrcorner"{anchor=center, pos=0.125}] & {\U \times X} \arrow[d, "\Delta"] \\
    {X\times T} \arrow[r, "{\big((\corner{P},\pi_1), (\corner{Q}, \pi_1)\big)}"'] & {(\U \times X) \times_X (\U \times X)}
\end{tikzcd}.\]
Since the left hand side lives over $T$, one can take the Cartesian product with $T$ on the right hand side, yielding a Cartesian square in $\slice{\C}{X\times T}$ by \prelref{item: pullback of a product with X}. One can now apply the limit preserving functor $\prod_{X\times T\to T}\colon \slice{\C}{X\times T}\to\slice{\C}{T}$ to obtain the following Cartesian square over $T$:
\begin{equation}\label{Cartesian square: path space in U^X}
    \begin{tikzcd}[column sep=large]
    {\prod\limits_{X\times T\to T}\Eq_{/X\times T}(P,Q)} \arrow[r] \arrow[d] \arrow[dr, phantom, "\lrcorner"{anchor=center, pos=0.125}] & {\U^X\times T} \arrow[d, "{\Delta\times T}"] \\
    T \arrow[r, "{(\corner{P},\corner{Q}, 1_T)}"'] & {(\U^X\times\U^X)\times T}
\end{tikzcd}.
\end{equation}
On the right hand side, we used Beck-Chevalley \cite[Lemma 2.1]{Frey&Rasekh_ConstructingCoproductsInLCCC} for the Cartesian square
\[\begin{tikzcd}
    {X \times T} \arrow[r] \arrow[d] \arrow[dr, phantom, "\lrcorner"{anchor=center, pos=0.125}] & X \arrow[d] \\
    T \arrow[r] & \ast
\end{tikzcd}\]
to obtain an equivalence \[\prod_{X\times T\to T} (-\times T)\simeq (\prod_X-)\times T\] between functors $\slice{\C}{X}\to\slice{\C}{T}$, as well as the equivalence $\prod_X (A\times X)\simeq A^X$ for all $A$. Using again \prelref{item: pullback of a product with X}, one can forget about the $T$'s on the right hand side of \eqref{Cartesian square: path space in U^X} to obtain a pullback square 
\[\begin{tikzcd}[column sep=large]
    {\prod_{X \times T \to T} \Eq_{/X \times T}(P,Q)} \arrow[r] \arrow[d] \arrow[dr, phantom, "\lrcorner"{anchor=center, pos=0.125}] & {\U^X} \arrow[d, "\Delta"] \\
    T \arrow[r, "{(\corner{P}, \corner{Q})}"'] & {\U^X \times \U^X}
\end{tikzcd}.\]
By definition of path objects \eqref{definition: path objects}, this pullback is precisely $\U^X\Big(\corner{P},\corner{Q}\Big)$ as desired.
\end{proof}

\begin{proof}[Proof of \cref{Yoneda Embedding}]
    By Proposition \ref{prop: characterization monomorphisms 5}, it suffices to show that the commutative square
    \begin{equation}\label{proof: Yoneda embedding commutative square}
    \begin{tikzcd}[ampersand replacement=\&, column sep=huge]
        X \& {\U^X} \\
        {X\times X} \& {\U^X\times\U^X}
        \arrow["{\yo_X}", from=1-1, to=1-2]
        \arrow["{\Delta_X}"', from=1-1, to=2-1]
        \arrow["{\Delta_{\U^X}}"', from=1-2, to=2-2]
        \arrow["{\yo_X\times \yo_X}", from=2-1, to=2-2]
    \end{tikzcd}
    \end{equation}
    is Cartesian. We first produce, for every pair of generalized elements $x_0,x_1\colon T\to X$, an equivalence between the generalized fibers of $\Delta_X$ at $(x_0,x_1)\colon T\to X\times X$ and of $\Delta_{\U^X}$ at \[\big(X(-,x_0),X(-,x_1)\big)\colon T\to X\times X\to\U^X\times\U^X.\] In more familiar words, we show that there is an equivalence in $\slice{\C}{T}$: 
\begin{equation}\label{proof: Yoneda embedding eq}
    X(x_0,x_1)\simeq\U^X\big(X(-,x_0), X(-,x_1)\big).
\end{equation}
    The former object is $(\slice{X}{x_1})_{x_0}$ over $T$ by \cref{Notation: Path space between two point in X}, which we identify in $\slice{\C}{T}$ with:
   \begin{align*}
    &\prod_{X\times T\to T}\Map_{/X\times T}(\slice{X}{x_0}, \slice{X}{x_1})&&\proofstep{by the Yoneda \cref{Yoneda Lemma}}\\
   \simeq & \prod_{X\times T\to T}\Eq_{/X\times T}(\slice{X}{x_0}, \slice{X}{x_1})&&\proofstep{by \cref{lemma: nat trans between representables are nat equivalences}}\\
   \simeq & \,\,\U^X\big( X(-,x_0), X(-,x_1)\big)&&\proofstep{by \cref{Lemma: path in fibered universe are equivalences over the base}.}
\end{align*}

    We emphasize that this equivalence is not claimed to be the comparison map between the fibers of \eqref{proof: Yoneda embedding commutative square} induced by $\yo_X$. Instead of showing that this is indeed the case, we use it to argue that the comparison map \[\varphi\colon X\to (X\times X)\times_{(\U^X\times\U^X)}\U^X\] induced by \eqref{proof: Yoneda embedding commutative square} is an equivalence, which shows that the square is Cartesian. To this end, specialize to $T=X\times X$, with the two projections $(x_0,x_1)=(\pi_1,\pi_2)=1_{X\times X}\colon T\to X\times X$ as generalized elements. Over $T$ we now have
    \begin{itemize}
        \item  an equivalence $X(x_0,x_1)\simeq X$ with structure map $\Delta_X\colon X\to X\times X$. This map admits a retraction $\pi_1$;
        \item an identification of $\U^X\big( X(-,x_0), X(-,x_1)\big)$ as the pullback $(X\times X)\times_{(\U^X\times\U^X)}\U^X$ of \eqref{proof: Yoneda embedding commutative square}.
        \end{itemize}
    Thus \eqref{proof: Yoneda embedding eq} is an equivalence $X\xrightarrow{\sim}(X\times X)\times_{(\U^X\times\U^X)}\U^X$ over $X\times X$, and the comparison map $\varphi$ is another morphism between the same two objects over $X\times X$. Since $\Delta_X$ has a retraction, every such morphism is an equivalence by \cref{lemma: retract rigidity}. In particular $\varphi$ is an equivalence.
\end{proof}
\section{Applications}\label{section: application}
We record two applications. The first specializes the main results to the $\infty$-category of spaces, recovering the classical Yoneda statements for $\infty$-groupoids.
The second is the application that motivated this work, to the theory of covering spaces in $\infty$-topoi.
\subsection{Yoneda for $\infty$-groupoids}
The $\infty$-category $\Sp$ is finitely complete and locally Cartesian closed, and for every small $\infty$-groupoid $X$ the diagonal $\Delta_X$ is classified by an object classifier $\Sp^\simeq_{\kappa,\ast}\to\Sp^\simeq_\kappa$, for $\kappa$ a sufficiently large small regular cardinal \cite[Ex.~3.5, Prop.~3.8]{GepnerKock_UnivalenceInLCCC}. The main results of \cref{section : Yoneda} therefore specialize to $\Sp$, where they recover the classical statements.
\begin{corollary}\label{yoneda lemma for groupoids}
    Let $\X$ be a small $\infty$-groupoid, $F\colon \X^\op\to\Sp$ a presheaf, and $x$ an object in $\X$. Then there is an equivalence \[F(x)\simeq \Psh(\X)\left(\X(-,x),F\right).\]
\end{corollary}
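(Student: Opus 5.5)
We will obtain the corollary by specializing the internal Yoneda lemma (\cref{Yoneda Lemma}) to $\C=\Sp$, $T=\ast$ and $x_0=x\colon\ast\to\X$, and then translating both sides back into external language by straightening. The plan has three steps.

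\textbf{Step 1 (internal fibration).} Since $\X$ is an $\infty$-groupoid, $\X^\op\simeq\X$. Unstraightening over the space $\X$ (\eqref{eq: (un)straightening}) turns $F$ into a map of spaces $P\to\X$ whose fiber over $y$ is $F(y)$. We take $T=\ast$, so that $X\times T=\X$.

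\textbf{Step 2 (apply the lemma).} \cref{Yoneda Lemma} then gives $\prod_{\X}\Map_{/\X}(\slice{\X}{x},P)\simeq P_x$. By \cref{definition: internal representable presheaf and fibers}, $P_x$ is the homotopy fiber of $P\to\X$ over $x$, which is $F(x)$. It remains to identify the left-hand side with $\Psh(\X)(\X(-,x),F)$.

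\textbf{Step 3 (identify the left-hand side).} Over a point, $\prod_\X\Map_{/\X}(A,B)$ is the mapping space $\Sp_{/\X}(A,B)$: maps $\ast\to\prod_\X\Map_{/\X}(A,B)$ correspond to maps $\X\times_\X A\to B$ over $\X$, and in $\Sp$ an object is its space of global points. So the left-hand side is $\Sp_{/\X}(\slice{\X}{x},P)$.

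\begin{itemize}
\item Straightening is an equivalence $\Sp_{/\X}\simeq\Fun(\X^\op,\Sp)=\Psh(\X)$, because right fibrations over an $\infty$-groupoid are just maps of spaces over it. Hence $\Sp_{/\X}(\slice{\X}{x},P)\simeq\Psh(\X)(\mathrm{St}(\slice{\X}{x}),F)$.
\item We must check that $\mathrm{St}(\slice{\X}{x})\simeq\X(-,x)$. By \cref{Notation: Path space between two point in X}, the fiber of $\slice{\X}{x}$ over $y$ is the path space $\X(y,x)$.
\item This fiberwise description alone does not pin down the functor. To get the full identification, we use that $\slice{\X}{x}$ is, by construction, the pullback of the diagonal along $\X\times x$. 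The diagonal is classified by the straightening of $(\mathrm{dom},\mathrm{cod})\colon\Orb_\X\to\X\times\X$, which is the mapping-space bifunctor (\cref{examples: fibrations}). Restricting that bifunctor to $\X\times\{x\}$ gives exactly $\X(-,x)$.
\end{itemize}

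The main obstacle is Step 3's dictionary: identifying $\prod_\X\Map_{/\X}$ with a mapping space in $\Psh(\X)$, and identifying $\slice{\X}{x}$ with the unstraightening of the representable $\X(-,x)$, compatibly with straightening. The individual identifications are each standard; the delicate part is only that they fit together coherently under straightening.
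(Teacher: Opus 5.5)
Your proposal is correct and follows the paper's proof essentially step for step: you unstraighten $F$ to $P\to\X$, apply \cref{Yoneda Lemma} with $\C=\Sp$ and $T=\ast$, pass to global points and use the two adjunctions to reach $\slice{\Sp}{\X}(\slice{\X}{x},P)$, then straighten. The only cosmetic difference is that the paper identifies the straightening of $\slice{\X}{x}$ with $\X(-,x)$ by citing Example~\ref{examples: fibrations 2} directly, whereas you derive it by restricting the bifibration $\Orb_\X\simeq\X\to\X\times\X$ along $\X\times\{x\}$, which is exactly how that example is built.
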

\begin{proof}
    By the internal Yoneda \cref{Yoneda Lemma} with $\C=\Sp$, $T=\ast$, and $P\to\X$ the unstraightening of $F$, we have 
    \begin{align*}
        F(x)&\simeq P_{x}\simeq \prod_{\X}\Map_{/\X}\left(\slice{\X}{x},P\right)\\
        &\simeq \Sp(\ast, \prod_{\X}\Map_{/\X}\left(\slice{\X}{x},P\right))\\
        &\simeq \slice{\Sp}{\X}\left(\slice{\X}{x},P\right)&&\proofstep{$\left(-\times \X\dashv \prod_{\X}\right)$ and $\left(-\times_{\X} \slice{\X}{x}\dashv\Map_{/\X}(\slice{\X}{x},-)\right)$}\\
        &\simeq \mathscr{R}\mathrm{Fib}(\X)\left(\slice{\X}{x},P\right)&&\proofstep{$\Sigma_x\colon \slice{\X}{x}\to\X$ and $P\to \X$ are right fibrations}\\
        &\simeq \Psh(\X)\left(\X(-,x),F\right) &&\proofstep{by straightening \eqref{eq: (un)straightening} and Example \ref{examples: fibrations 2}.}
    \end{align*}
\end{proof}
\begin{corollary}\label{Cor: yoneda for groupoids}
    Let $\X$ be a small $\infty$-groupoid. Then the Yoneda map $\yo_\X\colon\X\to \Psh(\X)$ is fully faithful.
\end{corollary}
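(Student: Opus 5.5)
We will deduce the corollary by specializing \cref{Yoneda Embedding} to $\C=\Sp$ and then comparing the internal Yoneda map with the external functor $\yo_\X\colon\X\to\Psh(\X)$. The setting applies: $\Sp$ is finitely complete and locally Cartesian closed, and by \cite[Ex.~3.5, Prop.~3.8]{GepnerKock_UnivalenceInLCCC} there is a univalent object classifier $\Sp^\simeq_{\kappa,\ast}\to\Sp^\simeq_\kappa$ classifying $\Delta_\X$ for $\kappa$ large enough. The theorem then says that $\yo_\X\colon\X\to(\Sp^\simeq_\kappa)^\X$ is a monomorphism of spaces. Here the exponential is the space $\Fun(\X,\Sp^\simeq_\kappa)$, which, since $\X$ is a groupoid and $\X\simeq\X^\op$, is the maximal subgroupoid $\Psh_\kappa(\X)^\simeq$ of $\kappa$-small presheaves.

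The first step is to identify the internal map $\yo_\X$ with the external Yoneda functor, restricted to maximal subgroupoids. The internal map is the adjoint of the classifying map $\X(-,-)\colon\X\times\X\to\Sp^\simeq_\kappa$ of $\Delta_\X$. By the motivating discussion in \cref{subsection: the Yoneda map}, $\Delta_\X$ corresponds to the bifibration $\Orb_\X\to\X\times\X$, whose straightening is the mapping space functor. Hence the internal $\yo_\X$ sends $x$ to $\X(-,x)$, and agrees with the external Yoneda functor landing in the $\kappa$-small presheaves.

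The second step is to promote the monomorphism statement to full faithfulness. By the dictionary recalled before \cref{Yoneda Embedding}, a monomorphism of spaces induces equivalences $\X(x_0,x_1)\simeq\Psh_\kappa(\X)^\simeq(\yo x_0,\yo x_1)$ on path spaces. However, full faithfulness into $\Psh(\X)$ concerns all natural transformations $\Psh(\X)(\X(-,x_0),\X(-,x_1))$, not only invertible ones. We close this gap in one of two ways.

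\begin{itemize}
\item The first option is to use \cref{yoneda lemma for groupoids} with $F=\X(-,x_1)$. This gives $\Psh(\X)(\X(-,x_0),\X(-,x_1))\simeq\X(x_0,x_1)$.
\item The second option is to use \cref{lemma: nat trans between representables are nat equivalences} with $T=\ast$. By straightening, as in the proof of \cref{yoneda lemma for groupoids}, it shows that every natural transformation between representables is invertible. The full mapping space then coincides with the groupoid mapping space, and the monomorphism gives the equivalence.
\end{itemize}

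Finally, the inclusion $\Psh_\kappa(\X)\subseteq\Psh(\X)$ is fully faithful, so nothing depends on $\kappa$.

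The main obstacle is coherence. We must check that the equivalence of mapping spaces obtained this way is the one induced by the functor $\yo_\X$, not merely some abstract equivalence. The second option handles this cleanly. The Yoneda functor restricts to a functor between maximal subgroupoids, and its action there is the comparison map of the Cartesian square in \cref{Yoneda Embedding}, which is an equivalence. Since all morphisms between representables are invertible, the maximal-subgroupoid mapping spaces are the full mapping spaces, so $\yo_\X$ is fully faithful.
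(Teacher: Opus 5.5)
Your argument is correct, and through your second option it is essentially the paper's proof: you specialize \cref{Yoneda Embedding} to $\C=\Sp$ to get that $\yo_\X$ is fully faithful into $\Fun(\X,\Sp_\kappa)^\simeq\subseteq\Psh(\X)^\simeq$, and then use \cref{lemma: nat trans between representables are nat equivalences} with $T=\ast$ to see that all natural transformations between representables are invertible, so the maximal-subgroupoid mapping spaces are the full ones. You are right to rely on that option rather than on \cref{yoneda lemma for groupoids}, which by itself only gives an abstract equivalence $\Psh(\X)(\X(-,x_0),\X(-,x_1))\simeq\X(x_0,x_1)$ and does not show that the map induced by $\yo_\X$ is one.
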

\begin{proof}
    By the internal Yoneda embedding \cref{Yoneda Embedding}, the Yoneda map \begin{equation}\label{Cor: yoneda for groupoids eq}
        \yo_\X\colon \X\longrightarrow \U^\X=\Map_\Sp(\X,\Sp_\kappa^\simeq)\simeq\Fun(\X,\Sp_\kappa)^\simeq
    \end{equation}
    is a monomorphism of spaces, that is, fully faithful. Since $\X\simeq \X^\op$ and $\Sp_\kappa^\simeq\hookrightarrow\Sp^\simeq$ is fully faithful, the target of \eqref{Cor: yoneda for groupoids eq} embeds in $\Psh(\X)^\simeq$. Finally, every natural transformation between representable presheaves on a groupoid is invertible by \cref{lemma: nat trans between representables are nat equivalences} for $\C=\Sp$ and $T=\ast$. Whence \[\Psh(\X)^\simeq(\X(-,x_0),\X(-,x_1))\simeq \Psh(\X)(\X(-,x_0),\X(-,x_1))\] for all $x_0,x_1\in \X$. The canonical comparison
    $\X(x_0,x_1)\to\Psh(\X)(\X(-,x_0),\X(-,x_1))$ is thus an equivalence, i.e.\ $\yo\colon \X\to\Psh(\X)$
    is fully faithful.
    \end{proof}
    \subsection{Covering spaces}\label{subsection: application covering spaces}
This work originated in the theory of $n$-covering spaces in an $\infty$-topos $\E$ developed in \cite{Constantin_HigherCoveringSpacesinHigherTopos}, whose central theorem reads as follows:
\begin{theorem*}[{\cite[Cor.~5.34]{Constantin_HigherCoveringSpacesinHigherTopos}}]
    Let $(X,x)\in\E_\ast$ be a pointed connected object, and let $p\colon E\to X$ be a normal $n$-covering map. Then $\B^{n-1}\pi_n(E,e)\to \Pi_n(X,x)$ is a normal $n$-group morphism, and there is an equivalence of $n$-groups:
    \[\Deck(p)\simeq \Pi_n(X,x)\sslash \B^{n-1}\pi_n(E,e).\]
\end{theorem*}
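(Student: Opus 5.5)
The plan is to compute $\Deck(p)$ as an internal automorphism object and then use the Yoneda results of \cref{section : Yoneda} to identify it with the quotient $n$-group. The first step is to set $\Deck(p)\coloneq\prod_X\Eq_{/X}(E,E)$, the object of fibrewise self-equivalences of $E$ over $X$. Choosing a universe $\U$ that classifies $p$, \cref{Lemma: path in fibered universe are equivalences over the base} (with $T=\ast$) identifies this object with the loop object $\Omega_{\corner{E}}\U^X$. Its $n$-group structure then comes from loop composition, which matches composition of deck transformations.

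Next I would use normality to present $p$ as a pulled-back representable. I expect a normal $n$-covering to be the fibre of a map $c\colon X\to\B G$, where $G\coloneq\Pi_n(X,x)\sslash\B^{n-1}\pi_n(E,e)$ is the quotient $n$-group, and normality of $\B^{n-1}\pi_n(E,e)\to\Pi_n(X,x)$ is exactly what makes this quotient an $n$-group. The first claim of the theorem, that this morphism is a normal $n$-group morphism, should be read off from the long fibre sequence $E\to X\to\B G$ together with the fact that $p$ is $n$-truncated.

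In this presentation $E\simeq c^\ast(\slice{\B G}{\ast})$, where $\slice{\B G}{\ast}\to\B G$ is the internal representable fibration at the base point. Applying \cref{Yoneda Lemma} with $T=\ast$, and then \cref{lemma: nat trans between representables are nat equivalences}, gives
\[\prod_{\B G}\Eq_{/\B G}(\slice{\B G}{\ast},\slice{\B G}{\ast})\simeq(\slice{\B G}{\ast})_\ast\simeq\Omega\B G\simeq G.\]
So the automorphisms of the universal $G$-torsor form exactly $G$. This is the point where the internal Yoneda lemma replaces the classical argument that the automorphisms of a principal bundle over a connected base are the structure group.

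The main obstacle is the comparison step. I need to show that pulling back along $c$ induces an equivalence
\[\prod_{\B G}\Eq(\slice{\B G}{\ast},\slice{\B G}{\ast})\to\prod_X\Eq_{/X}(E,E),\]
that is, an equivalence on loop objects $\Omega_{\corner{\slice{\B G}{\ast}}}\U^{\B G}\to\Omega_{\corner{E}}\U^X$ induced by $c^\ast$. My first attempt would use the connectivity of $c$: $X$ is connected and $\Pi_n(X,x)\to G$ is surjective, so $c$ is highly connected. Since $\U$ restricted to $n$-truncated maps is $(n+1)$-truncated, restriction along $c$ should then be fully faithful on the relevant components of $\U^{(-)}$. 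The remaining check is that this comparison respects the group structure. For that I would apply \cref{lemma: retract rigidity} in the same way as in the proof of \cref{Yoneda Embedding}: the comparison map lies over $\U^X$ and is already known to be some equivalence, so it is forced to be the canonical one, and the identification $\Deck(p)\simeq G$ then holds as $n$-groups.
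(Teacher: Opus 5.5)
\textbf{There is a genuine gap: the main step is assumed rather than proved.} You \emph{expect} that $p$ is the pullback $c^\ast(\slice{\B G}{\ast})$ along some $c\colon X\to\B G$ with $G=\Pi_n(X,x)\sslash\B^{n-1}\pi_n(E,e)$, and everything afterwards rests on this.

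The hypotheses do not hand you such a $c$. They are: $p$ is $(n-1)$-truncated (not $n$-truncated as you write), $E$ is pointed and $(n-1)$-connected, and $\pi_n(E,e)\le\pi_n(X,x)$ is normal. Constructing $c$ is essentially the content of the theorem. Your route to the first claim is also circular. $G$ is only known to be an $n$-group once $\B^{n-1}\pi_n(E,e)\to\Pi_n(X,x)$ is known to be normal, so normality cannot be read off a fibre sequence $E\to X\to\B G$ whose existence presupposes it.

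In the case the paper actually sketches ($n=1$, $p$ the universal cover), the map is canonical: $c=\eta\colon X\to\tau_1X\simeq\B\pi_1(X,x)$. The paper never computes loops by hand. It identifies the image of the point $\corner{E}\colon\ast\to\U^X$ in two ways:
\begin{enumerate}
\item as $\B\pi_1(X,x)$, because the base point is an effective epimorphism, $\yo$ is a monomorphism by \cref{Yoneda Embedding}, and $\U^\eta$ is an equivalence since $\eta$ is $1$-connected and $\U$ is $1$-truncated;
\item as $\B\Deck(p)$, a pointed connected subobject with loops $\Omega_{\corner{E}}\U^X\simeq\Deck(p)$.
\end{enumerate}
Looping the resulting equivalence of deloopings gives the equivalence of groups.

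\textbf{Even granting $c$, two later steps fail as written.}

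First, the connectivity count is off. $c$ is exactly $(n-1)$-connected (its fibre is $E$), not highly connected. The universe of $(n-1)$-truncated maps is $n$-truncated, so $\U^c$ is not an equivalence. What does hold is that $\U^c$ is a monomorphism. By \cref{Lemma: path in fibered universe are equivalences over the base}, its effect on path objects is restriction of sections of the $(n-1)$-truncated maps $\Eq_{/\B G\times T}(P,Q)\to\B G\times T$ along the $(n-1)$-connected map $c\times T$. That restriction is an equivalence by orthogonality, and this suffices on loops. With your $n$-truncated reading of $p$ it would not suffice.

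Second, the group structure is not established. The Yoneda lemma together with \cref{lemma: nat trans between representables are nat equivalences} only gives an equivalence of underlying objects $\prod_{\B G}\Eq(\slice{\B G}{\ast},\slice{\B G}{\ast})\simeq\Omega\B G$, not an equivalence of $n$-groups. \cref{lemma: retract rigidity} cannot upgrade it, for two reasons:
\begin{enumerate}
\item the structure map of a loop object to $\U^X$ factors through $\ast$, so it has no retraction;
\item the lemma only detects that maps are equivalences, not that they are compatible with multiplication.
\end{enumerate}
Use \cref{Yoneda Embedding} for $\B G$ instead. $\yo_{\B G}$ is a monomorphism and $\U^c\yo_{\B G}(\ast)=\corner{E}$. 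Hence $\Omega(\U^c\circ\yo_{\B G})\colon G\to\Omega_{\corner{E}}\U^X\simeq\Deck(p)$ is an equivalence, and as the loop map of a pointed map it is an equivalence of $n$-groups. Equivalently, you can pass to deloopings and compare images, as the paper does.
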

Here $p$ is an $(n-1)$-truncated map, $E$ is pointed and $(n-1)$-connected, and $\pi_n(E,e)\leq\pi_n(X,x)$ is normal. The $n$-group of deck transformations of $p$ is $\Deck(p)\coloneqq \prod_X\Eq_{/X}(E,E)\in\Grp(\E)$, and the fundamental $n$-group is $\Pi_n(X,x)\coloneq \Omega(\tau_n X)\in\Grp(\E)$. We indicate how the Yoneda embedding enters, in the representative case $n=1$ with $p$ the universal $1$-cover (so $\pi_1(E,e)=0$ and $\Pi_1(X,x)\simeq\pi_1(X,x)$).
\begin{proof}[Proof sketch]
 Let $\U$ be a universe classifying $(n-1)$-truncated maps. Since $\B\pi_1(X,x)\simeq\tau_1X$ is $1$-truncated, its diagonal is $0$-truncated, hence classified by $\U$. Transposing the composite
\[X\times X\xrightarrow{\eta\times\eta}\B\pi_1(X,x)\times\B\pi_1(X,x)\xrightarrow{\B\pi_1(X,x)(-,-)}\U\] 
  and precomposing with the base point gives a commutative diagram
\[\begin{tikzcd}
	\ast \arrow[r, "x"] \arrow[dr, two heads] & X \arrow[r] \arrow[d, "\eta"'] & {\U^X} \\
	& {\B\pi_1(X,x)} \arrow[r, "\yo"', hook] & {\U^{\B\pi_1(X,x)}} \arrow[u, "\U^\eta"', "\simeq"]
\end{tikzcd}.\]
The base point is an effective epimorphism, $\yo$ is a monomorphism by \cref{Yoneda Embedding}, and $\U^\eta$ is an equivalence because $\eta$ is $1$-connected and $\U$ is $1$-truncated. Hence $\B\pi_1(X,x)$ is the image of the point $\ast\to\U^X$. This point classifies the universal $1$-cover, picking out the internal presheaf $y\mapsto\pi_0(X(x,y))$. Its image is therefore a pointed connected subobject of $\U^X$, whose loop object is \[\Omega_{\corner{E}}\,\U^X\simeq\prod_X\Eq_{/X}(E,E)=\Deck(p),\] by \cref{Lemma: path in fibered universe are equivalences over the base}. A pointed connected object is the delooping of its loops, so the image is $\B\Deck(p)$. Comparing the two computations of the image gives  $\B\Deck(p)\simeq\B\pi_1(X,x)$, and looping yields $\Deck(p)\simeq\pi_1(X,x)$, the stated equivalence in this case.
\end{proof}
    
\printbibliography
\end{document}